\newtheorem{theorem}{Theorem}[section]
\newtheorem{prop}[theorem]{Proposition}
\newtheorem{lemma}[theorem]{Lemma}
\newtheorem{corollary}[theorem]{Corollary}
\theoremstyle{definition}
\newtheorem{definition}[theorem]{Definition}
\theoremstyle{definitions}
\newtheorem{example}[theorem]{Example}
\theoremstyle{remark}
\newtheorem{remark}[theorem]{\bf Remark}
 \numberwithin{equation}{section}
 \newcommand{\an}[2]{\mbox{\rm  ann}_{#1}({#2})}
\begin{document}

\title{Conjugate (nil) clean rings and K\"{o}the's problem  }
\author{  Jerzy Matczuk\thanks{This research was supported by the Polish National Center of Science
 Grant No. DEC-2011/03/B/ST1/04893. }\\
    $  $Institute of Mathematics, Warsaw University \\
  Banacha 2, 02-097 Warsaw, Poland\\
  E-mail: jmatczuk@mimuw.edu.pl}
\date{}
\maketitle
\begin{abstract}
  Question 3  of \cite{D} asks  whether the matrix ring $M_n(R)$ is   nil clean, for any   nil clean ring $R$. It is shown that positive answer to this question  is equivalent to positive solution for K\"{o}the's problem in the class of algebras over the field $\mathbb{F}_2$. Other equivalent problems are also discussed.

 The classes of conjugate     clean and conjugate  nil  clean rings, which
lie  strictly between uniquely (nil) clean  and (nil) clean rings are introduced and investigated.
\end{abstract}

The notion of   clean rings was introduced in 1977 by Nicholson  in \cite{N1}. Thereafter such rings and their variations  were intensively studied by many authors (cf. \cite{NZ2} and references within).

Recall that an element $a$ of a unital ring $R$ is clean if $a=e+u$, where $e$ is an idempotent and $u $ is a unit of $R$. When the above presentation is unique, $a$ is called uniquely clean. The ring $R$ is (uniquely) clean if   every element of $R$ is such.

Diesl  in \cite{D}  undertook  to    develop a general theory, based on idempotents and decomposition
of elements, that would unify some of existing concepts related to cleanness and regularity.  In this context a  class of (uniquely) nil clean rings, appeared naturally, i.e. rings in which every element can be (uniquely) presented as  $e+l$, for some idempotent $e$ and a nilpotent element $l$.  It is easy  to see that if $a$ is a  nil clean element, then  $-a$ is clean. Thus  nil clean rings are  clean.
Earlier, uniquely nil clean rings were considered by   Chen in \cite{Ch}.

In the paper, we introduce and investigate conjugate  clean and conjugate nil clean  rings. Those are (nil) clean rings in which idempotents appearing in decompositions  of elements described above are unique up to conjugation, i.e. if $a=e+s=f+t$ are such decompositions, then the idempotents $e,f$ are conjugate in $R$. Clearly every uniquely (nil) clean ring is  conjugate (nil) clean. In fact,  it   is not difficult to see that a ring $R$ is uniquely (nil) clean if and only if it is conjugate (nil) clean and all idempotents of $R$ are central. Thus the introduced classes of clean rings seem to be natural extensions of their "unique" counterparts.   We offer constructions and  characterizations  of such rings. In particular, it will become clear that the introduced classes are different and  the class of conjugate (nil) clean rings lies strictly between uniquely (nil) clean rings and (nil) clean rings. All this is presented in Section 2.

It is known that the matrix ring $ M_n(R)$ over a clean ring $R$ is    also clean (cf. Corollary 1 \cite{HN}). On the other hand,  Wang and  Chen \cite{WC}  constructed a commutative clean ring $R$ such that   not every element of    $M_n(R)$ can be presented as a sum of an idempotent and a unit that commute  with each other. In other words,  they proved that  a matrix ring over strongly clean ring does not have to be strongly clean.

Let $R$ be a nil clean ring. Then, by the above, $M_n(R)$ is a clean ring. Diesl  posed a question (Question 3 \cite{D}) whether  $M_n(R)$ is in fact nil clean. This question was the initial motivation for our studies. It remains unsolved, nevertheless we show that positive answer to this question is equivalent to positive solution for  K\"{o}the's problem in the class of algebras over the field $\mathbb{F}_2=\mathbb{Z}/2\mathbb{Z}$. In fact we present in Theorem \ref{koethe} various conditions related to clean rings which are equivalent to K\"{o}the's problem. It appears that formally weaker statement "$M_2(R)$ is nil clean for any uniquely nil clean $\mathbb{F}_2$-algebra  $R$" is, in fact, equivalent to Diesl 's question. On the other hand, there exist conjugate nil clean rings $R$ such that the matrix ring $M_2(R)$ is not conjugate nil clean.

\section{Preliminary results}

For a ring $R$, $J(R)$ will denote the Jacobson radical of $R$, $U(R)$ will stand for the group  of units of $R$.

The following  proposition will be crucial for our considerations.
\begin{prop}[Corollary 11 \cite{KLM}] \label{Cor idempotents}
 Let $e,f\in R$ be   idempotents such that $e-f$ is a nilpotent element or $e-f\in J(R)$. Then  $e$ and $f$ are conjugate in $R$, i.e. there exists   $u\in U(R)$ such that $e=ufu^{-1}$.
\end{prop}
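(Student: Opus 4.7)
The plan is to construct an explicit element $u\in R$ with $ue = fu$ (so that $u^{-1}fu = e$, giving the conjugacy sought), and to prove its invertibility by exhibiting a partner $v$ for which both $uv$ and $vu$ reduce to $1-(e-f)^2$; the hypothesis will force the latter to lie in $U(R)$.

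Concretely, I would set
\[
 u = 1 - e - f + 2fe \qquad\text{and}\qquad v = 1 - e - f + 2ef.
\]
Using only $e^2=e$ and $f^2=f$, a short expansion gives $ue = fu = fe$ (and symmetrically $ev = vf = ef$). A second, slightly longer but mechanical expansion yields the central identity
\[
 uv = vu \;=\; 1 - (e + f - ef - fe) \;=\; 1 - (e-f)^2.
\]
All apparently cubic terms collapse thanks to the idempotence relations, so nothing deeper than careful bookkeeping is needed here.

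Under either hypothesis on $e-f$, the element $(e-f)^2$ is quasi-regular: it is nilpotent when $e-f$ is, and it lies in $J(R)$ when $e-f$ does. In both cases $1-(e-f)^2$ is a unit. Since $uv$ and $vu$ are then units, $u$ admits a right inverse $v(uv)^{-1}$ and a left inverse $(vu)^{-1}v$, and so $u\in U(R)$. The relation $fu = ue$ rewrites as $f = ueu^{-1}$, which is exactly what is required (taking $u^{-1}$ as the conjugating unit in the statement of the proposition).

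The main obstacle, in my view, is purely psychological: locating the correct form of $u$. Once it is written down, everything is formal. One way to motivate the guess is to insist on $ue = fu$, to observe that the common value must lie in the corner $fRe$, and to impose the simplest symmetry in $e$ and $f$ consistent with $u\equiv 1$ modulo the ``small'' part $e-f$; this funnels one to the symmetric expression above.
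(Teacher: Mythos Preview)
Your proof is correct; the computations $ue=fu=fe$ and $uv=vu=1-(e-f)^2$ check out exactly as you indicate, and the invertibility argument is sound. Note that the paper does not supply its own proof of this proposition---it is simply quoted as Corollary~11 of \cite{KLM}---so there is nothing to compare against here; your argument is in fact the classical one (your $u$ is just $fe+(1-f)(1-e)$ in disguise), and it is precisely the kind of proof one would expect to find behind the citation.
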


Let us present an application of the above proposition.  It    will be needed later in the text but it is also of independent interest.  In the following theorem  $T$ is an over ring of a ring $R$ such that $T=R\oplus I$, for some ideal $I$ of $T$. The two-sided annihilator  of $I$ in $R$ is define as $\an{R}{I}=\{ r\in R \mid rI=Ir=0\}$. Recall that a ring $R$ is called abelian if all its idempotents are central.

\begin{theorem}\label{thm1 central idempotents}
  Suppose $T=R\oplus I$, where $I$ is an ideal of $T$ such that $J(I)=I$. The following conditions are equivalent:
 \begin{enumerate}
 \item[(1)]    $T$ is an abelian ring;

\item[(2)] All idempotents of $R$ are central in $T$;

\end{enumerate}
If one of the above equivalent conditions holds then:
\begin{enumerate}
   \item[(3)]  All idempotents of $T$ are trivial (i.e. all idempotents of $T$ belong  to $R$) and $es=se$, for every idempotent $e$ of $R$ and $s\in I$;

\end{enumerate} Moreover,  when $\an{R}{I}=0$,  all the above statements are equivalent to:
\begin{enumerate}
  \item[(4)]  All idempotents of $R$  commute with elements of $I$.
\end{enumerate}

\end{theorem}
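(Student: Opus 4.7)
The plan is to prove the equivalence of (1) and (2) (with (3) as a byproduct), and then separately handle (4) under the annihilator hypothesis. The key ingredient is Proposition~\ref{Cor idempotents} together with the standard radical identity $J(I)=I\cap J(T)$, which together with the hypothesis $J(I)=I$ forces $I\subseteq J(T)$. This places every $s\in I$ inside the Jacobson radical of the whole ring, which is exactly what is needed to apply the previous proposition to idempotent differences that lie in $I$.

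For the equivalence (1)$\Leftrightarrow$(2): the implication (1)$\Rightarrow$(2) is immediate since an idempotent of $R$ is also an idempotent of $T$. For (2)$\Rightarrow$(1), I would take an arbitrary idempotent $f\in T$ and decompose it uniquely as $f=e+s$ with $e\in R$ and $s\in I$. Expanding $f^2=f$ and projecting onto $R\cong T/I$ shows that $e$ is itself an idempotent of $R$. Since $f-e=s\in I\subseteq J(T)$, Proposition~\ref{Cor idempotents} yields a unit $u\in U(T)$ with $f=ueu^{-1}$. Under assumption (2), $e$ is central in $T$, so $f=ueu^{-1}=e\in R$. Thus every idempotent of $T$ actually lies in $R$ and is central in $T$, which proves (1) and simultaneously establishes both assertions in (3) (the idempotents are in $R$, and they automatically commute with every $s\in I$ because they are central).

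For the last part, (2)$\Rightarrow$(4) is trivial. The work is in (4)$\Rightarrow$(2) under the extra assumption $\an{R}{I}=0$. Given an idempotent $e\in R$ and any $r\in R$, the strategy is to show that the commutator $c:=er-re\in R$ annihilates $I$ from both sides, so that $c\in\an{R}{I}=0$. For $s\in I$, both $rs$ and $sr$ lie in $I$ (since $I$ is an ideal of $T$), and by (4) the idempotent $e$ commutes with every element of $I$. Using this twice gives $ers=e(rs)=(rs)e=rse$ and $res=r(es)=r(se)=rse$, hence $cs=0$; the symmetric computation yields $sc=0$. Combined with (4), the vanishing of $c$ shows $e$ commutes with every element of $R\oplus I=T$, giving (2).

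The only subtle point in the whole argument is recognising that $J(I)=I$ forces $I\subseteq J(T)$ so that Proposition~\ref{Cor idempotents} applies; everything else is direct manipulation. The step (4)$\Rightarrow$(2) is the one genuinely using the annihilator hypothesis, and it is the cleanest place where the calculation can go wrong if one forgets that $rs$ and $sr$ remain inside $I$.
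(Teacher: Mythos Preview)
Your proof is correct and follows essentially the same route as the paper's: the same use of $I\subseteq J(T)$ together with Proposition~\ref{Cor idempotents} for (2)$\Rightarrow$(1), and the same annihilator trick for (4)$\Rightarrow$(2). The only cosmetic difference is that in (4)$\Rightarrow$(2) the paper shows $er(1-e)$ and $(1-e)re$ lie in $\an{R}{I}$ separately, whereas you work directly with the commutator $er-re$; both computations are equivalent.
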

\begin{proof} The implications   $(1)\Rightarrow (2)$ and  $(3)\Rightarrow (4)$ are tautologies.

$(2)\Rightarrow (1)\mbox{ and } (3)$. Let $e=e_0+s$ be an idempotent, where $e_0\in R$ and $s\in I$. Then $e ^2=e_0^2+w$, for some $w\in I$.  Thus $e_0\in R$ is an idempotent and the statement (2) implies that $e_0$ is central in $T$.   Moreover, as $e-e_0=s\in I\subseteq J(T)$, we can apply   Proposition \ref{Cor idempotents} to pick   $u\in U(T)$ such that  $e=ue_0u^{-1}=e_0\in R$. This shows that $e=e_0$ belongs to $R$ and it is central in $T$, i.e. statements (1) and (3) hold.

Suppose now that  $\an{R}{I}=0$  and   the property (4) is satisfied.
 Let   $e$ be an idempotent of $R$,    $r\in R$ and $s\in I$. Then, making use of  (4),  we have $0=e(rs)(1-e)=er(1-e)s$ and $0=e( sr)(1-e)=ser(1-e)$. This shows that $er(1-e)I=0=Ier(1-e)$, i.e.   $er(1-e)\in\an{R}{I}=0$.   Replacing the idempotent  $e$ by $(1-e)$ we also obtain $(1-e)re=0$, for any $r\in R$. Hence $e$ is central in $R$ and, by (4), $e$ is central in $T$ i.e. the statement (2) holds. This completes the proof of the theorem.
\end{proof}

Notice that  implications $(1)\Leftrightarrow (2)\Rightarrow (3)\Rightarrow (4)$  in the above theorem  hold  always without   any additional assumptions. Clearly the  equivalence of all the conditions do not hold in general. For example, it is easy to construct rings $T=R\oplus I$, such that    $\an{R}{I}$ contains a noncentral idempotent of $R$ but all idempotents of $R$ commute with elements of $I$ ( taking $T=R$ and $I=0$ we get  a trivial  example of this kind).

 Let $\sigma$ be an endomorphism of a ring $R$ and $ R[x;\sigma], R[[x;\sigma]]$ denote skew polynomial and skew power series rings over $R$, respectively.

\begin{corollary} \label{idempotents in power series} Let $T$ denote one of the rings  $R[[x;\sigma]]$ or $R[x;\sigma]/(x^n)$, where $\sigma$ is an endomorphism of $R$ and $n\geq 2$.  The following conditions are equivalent:
 \begin{enumerate}
 \item[(1)]  $T$ is an abelian ring;

\item[(2)] All idempotents of $R$ are central and  $\sigma(e)=e$, for every idempotent $e$ of $R$;
 \item[(3)]  All idempotents of $T$ are trivial and $\sigma(e)=e$, for every idempotent $e$ of $R$.
\end{enumerate}
\begin{proof} Notice that if $T=R[[x;\sigma]]$ then $I=Tx$ is the Jacobson radical ideal of $T$ and  $T=R\oplus I$.

For $T=R[x;\sigma]/(x^n)$, let   $\bar x$ denote the canonical image of $x$ in $T$. Then $I=T\bar x$ is a nilpotent ideal of $R$ and $T=R\oplus I$.

In both cases  $\an{R}{I}=0$ (in fact the left annihilator of $I$ in $R$ is equal to zero).  It is also standard to check that   an element $a\in R$ commutes with  elements of $I$ if and only if $a$ is central in $R$ and $\sigma(a)=a$. Now it is clear that the corollary is a consequence of Theorem \ref{thm1 central idempotents}.
\end{proof}
\end{corollary}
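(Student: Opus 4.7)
The plan is to recognise both rings as instances of Theorem~\ref{thm1 central idempotents} and then translate the commutation condition (4) of that theorem into the $\sigma$-fixed language used in the corollary. The only genuinely nontrivial step will be closing the loop from the corollary's (3) back to (2).

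First I would fix the ideal: $I=Tx$ when $T=R[[x;\sigma]]$ and $I=T\bar x$ when $T=R[x;\sigma]/(x^n)$. In either case $T=R\oplus I$ is immediate and the left annihilator of $I$ in $R$ vanishes, since $ax=0$ already forces $a=0$; so $\an{R}{I}=0$. To invoke Theorem~\ref{thm1 central idempotents} it remains to verify $J(I)=I$, which is automatic in the truncated case because $I$ is nilpotent, and in the power series case follows from $1-f\in U(T)$ for every $f\in I$ via the geometric series $\sum_{k\ge 0}f^k$.

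Next I would prove the dictionary that identifies Theorem~\ref{thm1 central idempotents}(4) with condition (2) of the corollary: an element $a\in R$ commutes with every element of $I$ if and only if $a$ is central in $R$ and $\sigma(a)=a$. The skew rule $xa=\sigma(a)x$ reduces the condition $a(bx)=(bx)a$ to $ab=b\sigma(a)$ for every $b\in R$; taking $b=1$ yields $\sigma(a)=a$, and then the same relation gives $ab=ba$. The converse is immediate by induction on the degree using $\sigma^k(a)=a$.

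With the dictionary in hand, Theorem~\ref{thm1 central idempotents} delivers at once $(1)\Leftrightarrow (2)$ of the corollary and shows that (2) implies the triviality of all $T$-idempotents; the fixed-point condition $\sigma(e)=e$ for idempotents $e\in R$ comes from the same dictionary. So only $(3)\Rightarrow (2)$ remains. For this I would, given an idempotent $e\in R$, an $r\in R$, and using $\sigma(e)=e$, form $g:=e+(1-e)re\,x\in T$ and check by a short direct computation (the cross terms vanishing because $e(1-e)=0$ and $(1-e)re\cdot(1-e)=0$) that $g^{2}=g$. Hypothesis (3) then puts $g\in R$, forcing $(1-e)re=0$; symmetrically, with $h:=e+er(1-e)\,x$ one obtains $er(1-e)=0$. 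Together these give the centrality of $e$ in $R$, and combined with $\sigma(e)=e$ this is exactly (2). I expect this last idempotent construction to be the principal obstacle; everything else is routine once Theorem~\ref{thm1 central idempotents} is in place.
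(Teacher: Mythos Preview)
Your proof is correct and follows the same route as the paper: identify $I$, check $T=R\oplus I$, $J(I)=I$, $\an{R}{I}=0$, establish the commutation dictionary, and invoke Theorem~\ref{thm1 central idempotents}. You actually go further than the paper, which simply asserts the corollary is now ``clear'': your explicit idempotent construction for $(3)\Rightarrow(2)$ fills a step the paper leaves to the reader, since the corollary's condition~(3) is formally weaker than Theorem~\ref{thm1 central idempotents}(3) once the dictionary is applied.
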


 We will say that  $a=e+t$   is a   clean  (nil clean)  decomposition of an element $a$ of $ R$ if   $e=e^2$ and $t$ is a unit (a nilpotent element) of $R$.
 \begin{definition}

\begin{enumerate}  An element $a\in R$ is called
  \item[(i)]    conjugate     clean  if it is clean and  for any  two clean  decompositions $a=e+u=f+v$  of $a$,  the idempo\-tents $e,f$ are conjugate;
  \item[(ii)]    conjugate  nil clean  if it is nil clean and  for any two  nil clean  decompositions $a=e+l=f+m$  of $a$,  the idempotents $e,f$ are conjugate.
\end{enumerate}

 \end{definition}
Clearly every uniquely (nil) clean   element is conjugate (nil) clean. Let us observe that:
\begin{remark}

  (1)    Proposition \ref{Cor idempotents} implies that      every idempotent of  a nil clean ring $R$ is conjugate nil clean.\\
    (2)   Let $a$ be a conjugate clean element of $R$. If either $a$  or $a-1$ is invertible   then $a$ is uniquely clean. In particular,
    all nilpotent  elements and all units   which are conjugate   clean   are, in fact,  uniquely   clean.

\end{remark}

Let us recall that idempotents
lift   modulo an ideal $I$ of $R$ if, for any $a\in R$ such that $a^2-a\in I$, there exists an idempotent $e\in R$ such that
$e-a\in I$. If the idempotent $e$ is uniquely determined by the element $a$, then we say that idempotents lift uniquely modulo $I$.

The following lemma is   known (cf. \cite{NZ}). We present its short proof for completeness.
\begin{lemma}[Lemma 17 \cite{NZ}] \label{lemma 17 NZ}
Let $R$ be a clean ring. Then idempotents lift modulo every ideal $I$ of $R$.
\begin{proof}
Let $a\in R$ be such that $a^2-a\in I$. By assumption  $a=e+u$, for some idempotent $e$ and a unit $u$ of $R$. Then $a-u(1-e)u^{-1}= e+ueu^{-1} +u-1=(a^2-a)u^{-1}\in I  $. This shows that $a$ lifts to $u(1-e)u^{-1}$.
\end{proof}
\end{lemma}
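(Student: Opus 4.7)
The plan is to produce an explicit idempotent in $R$ that differs from $a$ by an element of $I$, using the given clean decomposition. So first I would invoke cleanness to write $a = e + u$ with $e^2 = e$ and $u \in U(R)$. Note that $e$ itself cannot serve as the lift of $a$ unless $u \in I$, which in general it is not; hence we need to modify $e$ using $u$.

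The natural modification, given that $u$ is a unit, is to take a conjugate. My candidate is $f := u(1-e)u^{-1}$, which is automatically an idempotent because $1-e$ is. (The choice of $1-e$ rather than $e$ is dictated by the signs coming from $a = e + u$ versus $1 - a = (1-e) - u$; that is the point where one has to be a bit careful.) Then I would verify the key identity
\[
a - u(1-e)u^{-1} \;=\; e + u - 1 + ueu^{-1} \;=\; (a^2 - a)u^{-1},
\]
which, expanded via $a^2 = e + eu + ue + u^2$, is a short and routine algebraic check. Since $a^2 - a \in I$ and $I$ is a two-sided ideal, the right-hand side lies in $I$, which gives $a - f \in I$, so $f$ is the desired idempotent lifting.

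The only real obstacle is guessing the correct conjugate, namely $u(1-e)u^{-1}$ rather than, say, $ueu^{-1}$; after that, the verification is a one-line manipulation. I would not anticipate any further difficulty, and the proof uses nothing beyond the existence of a clean decomposition and the definition of an ideal, consistent with the statement of the lemma.
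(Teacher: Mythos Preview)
Your proposal is correct and is essentially identical to the paper's own proof: the paper also writes $a=e+u$, takes the idempotent $u(1-e)u^{-1}$, and verifies the same identity $a-u(1-e)u^{-1}=e+ueu^{-1}+u-1=(a^2-a)u^{-1}\in I$.
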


For an element $a\in R$ the canonical image of $a$ in the factor ring $R/I$ will be denoted by $\bar a$.

\begin{definition} Let $I$ be an ideal of a ring $R$.
We say that idempotents lift   up to   conjugation modulo $I$    if:\\
(i)    idempotents lift modulo $I$;\\
  (ii) if $e,f\in R$ are idempotents such that $\bar e= \bar f$, then $e$ and $f$ are conjugate in $R$.
\end{definition}
Clearly if idempotents lift uniquely modulo $I$, then they also lift up to conjugation. It is known that if $I$ is a nil ideal of $R$, then idempotents lift modulo $I$. The following lemma, which is a direct consequence of Proposition \ref{Cor idempotents}, says that in this case idempotents lift up to conjugation.
\begin{lemma}
 Let $I$ be an ideal of  $R$ contained in $J(R)$ such that idempotents lift modulo $I$. Then  idempotents lift up to conjugation modulo $I$.
\end{lemma}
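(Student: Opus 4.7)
The statement to prove decomposes into the two clauses of the definition of "lift up to conjugation modulo $I$''. Clause (i), that idempotents lift modulo $I$, is literally a hypothesis, so there is nothing to do there.

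For clause (ii), the plan is essentially a one-line reduction to Proposition \ref{Cor idempotents}. Given idempotents $e,f\in R$ with $\bar e=\bar f$ in $R/I$, the assumption $\bar e=\bar f$ translates to $e-f\in I$. Then I would invoke the hypothesis $I\subseteq J(R)$ to conclude that $e-f\in J(R)$. Now Proposition \ref{Cor idempotents} applies verbatim in its second case ($e-f\in J(R)$), producing a unit $u\in U(R)$ with $e=ufu^{-1}$. That is precisely what clause (ii) requires.

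There is no real obstacle here: the whole content of the lemma is the observation that the ``nilpotent or Jacobson'' hypothesis of Proposition \ref{Cor idempotents} is tailored exactly to conclude the conjugation part once the difference of the two idempotents lands in an ideal contained in $J(R)$. The only point worth flagging explicitly in the write-up is the trivial verification $\bar e=\bar f\iff e-f\in I$, which follows from the definition of the canonical map $R\to R/I$. Thus the proof will be essentially a single sentence citing Proposition \ref{Cor idempotents}.
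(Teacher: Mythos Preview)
Your proposal is correct and matches the paper's approach: the paper states that the lemma is a direct consequence of Proposition~\ref{Cor idempotents}, which is exactly the reduction you describe. Your write-up simply makes explicit the trivial unpacking of the definition that the paper leaves implicit.
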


 \begin{lemma}\label{lifting unit}
  Let $I$ be an ideal of $R$ contained in $J(R)$. Then:
 \begin{enumerate}\label{lifting conjugation}
   \item[(1)]  Let $a\in R$. If $\bar a=a+I\in R/I$ is invertible in $R/I$, then $a$ is invertible in $R$. In particular, $a+s $ is invertible, for any $s\in I$.
   \item[(2)]   Suppose   that idempotents lift modulo $I$.  If $e,f$ are idempotents of $R$ such that $\bar e, \bar f$ are conjugate in $R/I$, then  $e,f$ are conjugate in $R$.
 \end{enumerate}

 \end{lemma}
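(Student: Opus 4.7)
The plan is to handle the two parts separately, exploiting the fact that $1+J(R)\subseteq U(R)$ for the first part and invoking Proposition~\ref{Cor idempotents} for the second.

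For (1), the approach is standard. Given $\bar a\in U(R/I)$, I would pick any preimage $b\in R$ of $\bar a^{-1}$. Then $1-ab$ and $1-ba$ both lie in $I\subseteq J(R)$, so $ab$ and $ba$ are elements of the form $1-j$ with $j\in J(R)$ and therefore lie in $U(R)$. This exhibits suitable scalar multiples of $b$ as both a left and a right inverse of $a$, forcing $a\in U(R)$. The ``in particular'' clause falls out immediately: for $a\in U(R)$ and $s\in I$, the class $\overline{a+s}$ equals $\bar a$, which is a unit of $R/I$, so the first part applies to $a+s$.

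For (2), the plan is to lift the conjugator via (1) and then appeal to Proposition~\ref{Cor idempotents}. Suppose $\bar e=\bar v\bar f\bar v^{-1}$ for some $\bar v\in U(R/I)$. I would choose any preimage $u\in R$ of $\bar v$; part (1) then guarantees $u\in U(R)$. The element $ufu^{-1}$ is an idempotent of $R$ whose image modulo $I$ equals $\bar e$, so $e-ufu^{-1}\in I\subseteq J(R)$. By Proposition~\ref{Cor idempotents}, there exists $w\in U(R)$ with $e=w(ufu^{-1})w^{-1}$, and then $wu\in U(R)$ satisfies $e=(wu)f(wu)^{-1}$, as required.

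The only mildly subtle point is recognizing that merely lifting the conjugator $\bar v$ to a unit $u$ does not by itself conjugate $f$ to $e$ in $R$; it only produces an idempotent $ufu^{-1}$ congruent to $e$ modulo $I$. Bridging that congruence to an actual conjugacy in $R$ is precisely what Proposition~\ref{Cor idempotents} is designed to do, so once part (1) is in hand the whole argument reduces to a one-line invocation of that proposition. I do not anticipate any genuine obstacle in either part.
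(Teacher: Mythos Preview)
Your proposal is correct and follows essentially the same route as the paper: for (1) you lift an inverse and use $1+J(R)\subseteq U(R)$ to get two-sided invertibility, and for (2) you lift the conjugating unit via (1) and then invoke Proposition~\ref{Cor idempotents} to pass from congruence modulo $I$ to genuine conjugacy. The only differences are expository---you spell out the ``in particular'' clause and the explicit conjugator $wu$ a bit more than the paper does---and, like the paper, you do not actually use the hypothesis that idempotents lift modulo $I$ in part (2), which is harmless since the given $e,f$ already live in $R$.
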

 \begin{proof}
 (1) Suppose  $\bar a\in R/I$ is invertible in $R/I$. Then there exist  $b\in R$ and $s,t\in I$ such that $ab=1+s, ba=1+t$. Since $I\subseteq J(R)$, the elements $1+s, 1+t$ are invertible. This yields the thesis.

 (2) Let $\bar u\in U(R/I)$ be such that $\bar e=\bar u\bar f\bar u ^{-1}$. Then, by $(1)$,  $u$ is invertible in $R$ and
  $e=ufu^{-1}+s$, for some $s\in I\subseteq J(R)$. Now,   Proposition \ref{Cor idempotents} implies that $e$ and $ufu^{-1}$ are conjugate in $R$. Thus $e $ and $f$ are also conjugate.
 \end{proof}
 For any ring $R$, $M_n(R)$ and $UT_n(R)$ will denote the ring of $n $ by $n$  matrices over $R$ and its subring consisting of all upper triangular matrices, respectively.   $\mathbb{F}_2$ will stand for the field $\mathbb{Z}/2\mathbb{Z}$.

 \section{Conjugate (nil) clean rings}

 We begin with the following definition:
 \begin{definition}
 A ring $R$ is conjugate (nil) clean if every element of $R$ is conjugate (nil) clean.
 \end{definition}

Clearly every uniquely (nil) clean ring is conjugate (nil) clean.

It is known (see Lemma 5.5 of \cite{D} and Lemma 4 \cite{NZ}, respectively) that  idempotents in uniquely (nil) clean rings are central. Therefore we have:
\begin{remark}\label{remark conj clean + central idemp}
 The following conditions are equivalent:
\begin{enumerate}
  \item  $R$ is uniquely (nil) clean ring;
  \item  $R$ is conjugate (nil) clean, abelian ring.
\end{enumerate}
\end{remark}
The above suggests   that conjugate (nil) clean rings form a natural extension of the class of uniquely (nil) clean rings.

Theorem 3 of \cite{KKY} states that a matrix ring $M_n(D) $ over a division ring $D$ is nil clean if and only if $D=\mathbb{F}_2$ (in the case $D$ is a field, this result was obtained earlier in \cite{BD}).  With the help of this theorem,    we get the following characterization:

\begin{theorem}\label{nil clean matrices over D}
 Let $D$ be a division ring. Then:
\begin{enumerate}
  \item[(1)]   $M_n(D)$ is conjugate nil clean if and only if $D=\mathbb{F}_2$ and $n\leq 2$,
  \item[(2)]    $M_n(D)$ is   conjugate clean if and only if $D=\mathbb{F}_2$ and $n=1$.
\end{enumerate}
\end{theorem}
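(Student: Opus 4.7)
The argument has the same shape for both parts: the necessity of $D=\mathbb{F}_2$ is cheap, the small cases are easy, and the work lies in exhibiting a single element with two non-conjugate decompositions.

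For the necessity in~(1), conjugate nil clean implies nil clean, and Theorem~3 of \cite{KKY}, quoted just above the statement, forces $D=\mathbb{F}_2$. For the necessity in~(2), since every matrix ring over a division ring is clean, cleanness itself does not eliminate $D\neq \mathbb{F}_2$; instead one picks $d\in D\setminus\{0,1\}$ and observes that the scalar matrix $dI_n$ admits the two clean decompositions $dI_n=0+dI_n$ and $dI_n=I_n+(d-1)I_n$, whose idempotent parts $0$ and $I_n$ have different ranks and so cannot be conjugate.

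For the sufficient direction, the case $n=1,\ D=\mathbb{F}_2$ is immediate because $\mathbb{F}_2$ is Boolean, hence uniquely (nil) clean. The interesting positive assertion is that $M_2(\mathbb{F}_2)$ is conjugate nil clean. I would exploit that any nilpotent in $M_2(\mathbb{F}_2)$ squares to zero and has trace zero, while the idempotents of $M_2(\mathbb{F}_2)$ are $0,\ I_2$ (trace $0$) and the rank-$1$ idempotents, which form one $GL_2(\mathbb{F}_2)$-conjugacy class. For two nil clean decompositions $a=e+l=f+m$, the identity $\mathrm{tr}(a)=\mathrm{tr}(e)=\mathrm{tr}(f)$ forces $e,f$ to have ranks of the same parity. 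If $\mathrm{tr}(a)=1$, both are rank-$1$ and automatically conjugate; if $\mathrm{tr}(a)=0$, they lie in $\{0,I_2\}$, and the mixed case $e=0,\ f=I_2$ would give $l-m=I_2$, whence $0=l^{2}=(m+I_2)^{2}=m^{2}+I_2=I_2$, a contradiction.

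The main obstacle is the negative direction of~(1) for $n\geq 3$: producing an explicit $a\in M_n(\mathbb{F}_2)$ with two nil clean decompositions whose idempotent parts are not conjugate. The trace constraint forces the rank difference to be even, so for $n=3$ the natural target is a nilpotent $a$ together with a rank-$2$ idempotent $f$ such that $a-f$ is again nilpotent; then $a=0+a=f+(a-f)$ are two non-conjugate nil clean decompositions. Fixing $f=\mathrm{diag}(1,1,0)$ and writing out the nilpotency conditions $\mathrm{tr}=c_1=\det=0$ for both $a$ and $a-f$ (where $c_1$ denotes the sum of the principal $2\times 2$ minors) reduces the search to a small system of polynomial equations over $\mathbb{F}_2$; a short case analysis produces the explicit witness
\[
a=\begin{pmatrix}1&1&0\\1&0&1\\1&0&1\end{pmatrix},\qquad a-f=\begin{pmatrix}0&1&0\\1&1&1\\1&0&1\end{pmatrix},
\]
for which a direct computation confirms $a^{3}=0=(a-f)^{3}$. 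For $n>3$ the same counterexample transports via the block embeddings $a\mapsto\mathrm{diag}(a,0,\ldots,0)$ and $f\mapsto\mathrm{diag}(f,0,\ldots,0)$. Finally, for the negative direction of~(2) with $n\geq 2$, the element $a=E_{12}\in M_n(\mathbb{F}_2)$ admits $a=I_n+(a-I_n)$, where $a-I_n$ is upper triangular with $1$'s on the diagonal and hence invertible, and $a=f+(a-f)$, where $f$ is the block-diagonal idempotent with upper-left $2\times 2$ block $\begin{pmatrix}0&0\\1&1\end{pmatrix}$ and lower-right $(n-2)\times(n-2)$ block $I_{n-2}$, and $a-f$ is block-diagonal with invertible blocks. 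The idempotents $I_n$ and $f$ have ranks $n$ and $n-1$, so they are not conjugate, completing the argument.
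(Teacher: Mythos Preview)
Your proof is correct and follows the same overall architecture as the paper's: cite \cite{KKY} for the necessity of $D=\mathbb{F}_2$ in~(1), use the trace argument for $M_2(\mathbb{F}_2)$, and then exhibit explicit non-conjugate decompositions for the negative cases. The differences are in the choice of witnesses. For $n\ge 3$ in~(1), the paper decomposes the unipotent upper-triangular matrix with all super-diagonal entries equal to $1$ as $I_3$ plus a strictly upper-triangular nilpotent, and alternatively as the rank-$1$ all-ones matrix plus a strictly lower-triangular nilpotent; both nilpotent parts are visibly nilpotent and the idempotent ranks are $3$ versus $1$. Your witness instead uses ranks $0$ versus $2$ and requires checking $a^{3}=(a-f)^{3}=0$ by direct computation, which is fine but less transparent. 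For~(2), the paper gives a single $2\times 2$ example over an arbitrary ring,
\[
\begin{pmatrix}0&1\\1&0\end{pmatrix}=0+\begin{pmatrix}0&1\\1&0\end{pmatrix}=\begin{pmatrix}1&0\\0&0\end{pmatrix}+\begin{pmatrix}-1&1\\1&0\end{pmatrix},
\]
which simultaneously handles all $D$ and all $n\ge 2$; you split into the scalar argument for $D\neq\mathbb{F}_2$ and a separate block-diagonal construction over $\mathbb{F}_2$. Both routes reach the same conclusion; the paper's choices are slightly more economical.
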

\begin{proof} Clearly a division ring $D$ is nil  clean (conjugate clean) if and only if  $D=\mathbb{F}_2$. Therefore   we can restrict our attention to the case when $n\geq 2$.

 $(1)$
Suppose that the ring $ M_n(D)$ is conjugate nil clean. Then it is nil clean and   Theorem 3  \cite{KKY} shows that $D=\mathbb{F}_2$.
 The equation$$\left( \begin{array}{ccc}
                              1  & 1 & 1 \\
                             0 & 1 & 1 \\
                             0 & 0 & 1
                           \end{array}\right)
 = \left( \begin{array}{ccc}
                              1  & 0 & 0 \\
                             0 & 1 & 0 \\
                             0 & 0 & 1
                           \end{array}\right) + \left( \begin{array}{ccc}
                              0  & 1 & 1 \\
                             0 & 0 & 1 \\
                             0 & 0 & 0
                           \end{array}\right) =\left( \begin{array}{ccc}
                              1  & 1 & 1 \\
                             1 & 1 & 1 \\
                             1 & 1 & 1
                           \end{array}\right) + \left( \begin{array}{ccc}
                             0  & 0 & 0 \\
                             1 & 0 & 0 \\
                             1 & 1 & 0
                           \end{array}\right) $$
      implies that $M_n(\mathbb{F}_2)$ is not conjugate nil clean, for any $n\geq 3$. Thus $D=\mathbb{F}_2$ and $n\leq 2$, as required.

 By the same theorem,  $R=M_2(\mathbb{F}_2)$ is nil clean. Let $a=e+l=f+m$ be two nil clean decompositions of $a\in R$. Then $tr(a)=tr(e)=tr(f)$, where $tr(a)$ denotes the trace of the matrix $a$.
If $tr(a)=1$, then both $e$ and $f$ are conjugate to  $ \left(
    \begin{array}{cc}
      1 & 0 \\
      0 & 0 \\
    \end{array}
  \right) $.

Suppose $tr(a)=0$, Then either $e=0$ and $a$ is nilpotent or $e=1$ and $a$ is a unit. This implies that $e=f$, when  $tr(a)=0$. Therefore $M_2(\mathbb{F}_2)$ is conjugate nil clean.

 This completes the proof of (1).

(2) The equation
$$ \left(
    \begin{array}{cc}
      0 & 1 \\
      1 & 0 \\
    \end{array}
  \right)
  =\left(
    \begin{array}{cc}
      0 & 0 \\
      0 & 0 \\
    \end{array}
  \right) +
\left(
    \begin{array}{cc}
      0 & 1 \\
      1 & 0 \\
    \end{array}
  \right)
  =
  \left(
    \begin{array}{cc}
      1 & 0 \\
      0 & 0 \\
    \end{array}
  \right)
  + \left(
      \begin{array}{rc}
        -1 & 1 \\
        1 & 0 \\
      \end{array}
    \right)
$$ yields   that, for any ring $R$ and $n\geq 2$,  the matrix ring $M_n(R)$ is not conjugate   clean. This and the remark from the beginning of the proof imply $(2)$.
\end{proof}
  The following corollary is a direct consequence of the above theorem and its proof. It shows, in particular, that  a conjugate nil clean  ring does not have to be  conjugate clean. It is worth to mention  that every uniquely nil clean ring is uniquely clean (cf. Theorem 5.9\cite{D}).
\begin{corollary}\label{size of matrces}
(1) The ring $M_2(\mathbb{F}_2)$ is conjugate nil clean and it is neither  conjugate clean nor uniquely nil clean;\\
(2) Let $R$ be a ring of characteristic 2 and $n\geq 3$. Then    $M_n(R)$ is not conjugate nil clean; \\
(3) For any ring $R$ and $n\geq 2$,  $M_n(R)$ is never conjugate clean.
\end{corollary}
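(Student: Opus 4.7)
All three items follow from Theorem~\ref{nil clean matrices over D} together with a straightforward identity-padding trick that lifts the explicit $3\times 3$ and $2\times 2$ counterexamples displayed inside its proof to matrices of arbitrary larger size.

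For item (1) I would simply read off Theorem~\ref{nil clean matrices over D}: part (1) with $D=\mathbb{F}_2$ and $n=2$ gives that $M_2(\mathbb{F}_2)$ is conjugate nil clean, while part (2) with the same data gives that it is not conjugate clean. To see it is not uniquely nil clean, I would point out that the matrix unit $E_{11}$ is a non-central idempotent of $M_2(\mathbb{F}_2)$, contradicting the fact recalled just before Remark~\ref{remark conj clean + central idemp} that all idempotents of a uniquely nil clean ring are central.

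For item (2), I would reuse the $3\times 3$ equation displayed in the proof of Theorem~\ref{nil clean matrices over D}(1), which over any ring $R$ of characteristic $2$ shows that the upper unitriangular matrix $B$ has two nil clean decompositions $B=I_3+L=F+N$, where $F$ is the all-ones matrix and $L,N$ are strictly triangular. For $n\geq 3$ I set $A=\mathrm{diag}(B,I_{n-3})\in M_n(R)$ and form $A=I_n+\mathrm{diag}(L,0)=\mathrm{diag}(F,I_{n-3})+\mathrm{diag}(N,0)$. The right-most summands are nilpotent by their block structure, so these are bona fide nil clean decompositions, and the two idempotents disagree because the second is not the identity of $M_n(R)$ (as $F\neq I_3$); since $I_n$ is conjugate only to itself, they cannot be conjugate.

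For item (3), I would play the same game with the $2\times 2$ clean equation in the proof of Theorem~\ref{nil clean matrices over D}(2). Setting $A=\mathrm{diag}\bigl(\left(\begin{smallmatrix}0&1\\1&0\end{smallmatrix}\right),I_{n-2}\bigr)\in M_n(R)$, which is a unit because its block-diagonal determinant equals $-1$, I obtain the clean decomposition $A=0+A$, and also $A=\mathrm{diag}(E_{11},0)+\mathrm{diag}\bigl(\left(\begin{smallmatrix}-1&1\\1&0\end{smallmatrix}\right),I_{n-2}\bigr)$, whose second summand again has determinant $-1$ and is therefore a unit. The two idempotents are $0$ and a nonzero one, so they are not conjugate ($0$ is conjugate only to itself). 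The only verification needed in either padding is that nilpotency and invertibility survive the block extension, which is immediate from the block-diagonal form. I foresee no genuine obstacle: the corollary is essentially bookkeeping that extracts information already packaged in the proof of Theorem~\ref{nil clean matrices over D}.
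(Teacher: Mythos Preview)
Your proposal is correct and takes essentially the same route as the paper, which presents the corollary as a direct consequence of Theorem~\ref{nil clean matrices over D} and its proof; you have simply made explicit the block-diagonal padding and the extension of the $3\times 3$ example from $\mathbb{F}_2$ to arbitrary characteristic-$2$ rings that the paper leaves implicit. One cosmetic remark: over a noncommutative $R$ the phrase ``determinant equals $-1$'' should be replaced by exhibiting the explicit integer-entry inverse (the displayed $2\times 2$ matrices are invertible over $\mathbb{Z}$, hence over any $R$), but this is immediate.
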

We will see in Proposition \ref{there is no conj nil clean matrices} that the assumption about the characteristic of $R$ in the above corollary can be removed.

Let us record the following property, its easy proof is left as an exercise.
\begin{prop}\label{prop product}
  The   product $R_1\times \ldots\times  R_n $ is conjugate (nil) clean if and only if  all rings $R_i$, $1\leq i\leq n$, are such.
\end{prop}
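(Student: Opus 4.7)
The plan is to exploit the fact that both (nil) clean decompositions and conjugation in a finite direct product of rings behave componentwise. Write $R = R_1 \times \cdots \times R_n$. First I would record the following elementary observations: $U(R) = U(R_1) \times \cdots \times U(R_n)$; a tuple $(x_i)$ is idempotent (resp.\ nilpotent) in $R$ iff each $x_i$ is idempotent (resp.\ nilpotent) in $R_i$; hence a decomposition $a = e + t$ is clean (nil clean) in $R$ iff $a_i = e_i + t_i$ is clean (nil clean) in $R_i$ for every $i$; and two idempotents $e, f \in R$ are conjugate in $R$ iff $e_i$ and $f_i$ are conjugate in $R_i$ for each $i$ (one implication is immediate by projection, the other by assembling the conjugating units coordinatewise into a unit of $R$).

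Granting these observations, the \emph{if} direction is essentially a one-line argument. Given two (nil) clean decompositions $a = e + u = f + v$ in $R$, in each coordinate I obtain two such decompositions $a_i = e_i + u_i = f_i + v_i$, and conjugate (nil) cleanness of $R_i$ provides $w_i \in U(R_i)$ with $e_i = w_i f_i w_i^{-1}$; then $w = (w_1, \ldots, w_n) \in U(R)$ conjugates $f$ to $e$.

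For the \emph{only if} direction, I would fix an index $j$ and two (nil) clean decompositions $a_j = e_j + u_j = f_j + v_j$ of an element $a_j \in R_j$, and then lift them to a pair of (nil) clean decompositions of a single element of $R$ by padding the remaining coordinates \emph{identically} on both sides---using $0 = 0 + 0$ in the nil clean case and $0 = 1 + (-1)$ in the clean case. Applying the conjugate (nil) cleanness of $R$ to these two extended decompositions and projecting onto the $j$-th factor then yields the desired conjugation of $e_j$ and $f_j$ in $R_j$.

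There is no genuine obstacle here; the only subtlety is to ensure that the padding coincides on both sides outside the $j$-th coordinate, so that conjugacy in $R$ forces conjugacy of $e_j$ and $f_j$ in $R_j$ rather than merely of some unrelated tuples. The rest is bookkeeping, and the argument is uniform for both the clean and nil clean variants.
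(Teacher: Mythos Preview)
Your proof is correct and is precisely the natural componentwise argument one would expect; the paper in fact omits the proof entirely, stating only that ``its easy proof is left as an exercise,'' so there is nothing to compare against beyond confirming that your argument supplies the intended routine verification.

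One small expository remark: the ``subtlety'' you flag at the end is slightly mis-described. Identical padding in the off-$j$ coordinates is needed so that the two extended expressions are decompositions of the \emph{same} element of $R$ (otherwise conjugate (nil) cleanness of $R$ says nothing). But once that is arranged and you obtain conjugacy of the two idempotent tuples in $R$, the conjugacy of $e_j$ and $f_j$ in $R_j$ follows automatically by projecting the conjugating unit, regardless of what the padding idempotents were. So the real point is ``same element,'' not ``forcing the projection to work.'' This does not affect correctness. You might also make explicit, in the only-if direction, that each $R_j$ is (nil) clean to begin with---this is immediate since $R_j$ is a homomorphic image of $R$, but strictly speaking it is part of what must be checked before discussing two decompositions of $a_j$.
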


\begin{prop}\label{prop matrices over Boolean}
Let $R$ be a Boolean ring. Then:
 \begin{enumerate}
   \item[(1)] $M_n(R)$ is nil clean, for any $n\geq 1$;
   \item[(2)] $M_n(R)$ is conjugate nil clean if and only if $n\leq 2$.

 \end{enumerate}
\end{prop}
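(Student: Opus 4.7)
The key observation is that any Boolean ring $R$ is the directed union of its finitely generated subrings, and every finitely generated Boolean ring is finite and isomorphic to $\mathbb{F}_2^m$ for some $m\geq 0$. Indeed, if $R$ is generated by $a_1,\dots,a_k$, the relations $a_i^2=a_i$ together with commutativity force $R$ to be $\mathbb{F}_2$-spanned by the $2^k$ squarefree products $\prod_{i\in S}a_i$ (so $|R|\leq 2^{2^k}$); such a finite commutative ring has trivial Jacobson radical and finitely many maximal ideals, each with residue field $\mathbb{F}_2$, hence the Chinese Remainder Theorem yields $R\cong\mathbb{F}_2^m$. Combining this reduction with Proposition \ref{prop product} (and its obvious nil clean analogue) with the results already proved for $M_n(\mathbb{F}_2)$ will handle both parts.

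For (1), Theorem 3 of \cite{KKY} gives that $M_n(\mathbb{F}_2)$ is nil clean, hence so is $M_n(\mathbb{F}_2^m)=M_n(\mathbb{F}_2)^m$ for every $m\geq 0$. Given $A\in M_n(R)$, its finitely many entries generate a finite Boolean subring $R'\cong\mathbb{F}_2^m$ of $R$, so $A$ admits a nil clean decomposition in $M_n(R')$. Since idempotency and nilpotency of a matrix are defined by polynomial equations in its entries, such a decomposition is also a nil clean decomposition in $M_n(R)$.

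For (2), the implication $(\Rightarrow)$ is immediate: a Boolean ring has characteristic $2$ (from $(1+1)^2=1+1$), so Corollary \ref{size of matrces}(2) rules out $n\geq 3$. The case $n=1$ of $(\Leftarrow)$ is trivial because the only nilpotent of a Boolean ring is $0$, so $R$ is itself uniquely nil clean. For $n=2$ I would argue as follows: given two nil clean decompositions $A=E+L=F+M$ in $M_2(R)$, let $R'$ be the Boolean subring generated by the entries of $A,E,F,L,M$. Then $R'\cong\mathbb{F}_2^m$, and by Corollary \ref{size of matrces}(1) together with Proposition \ref{prop product}, $M_2(R')\cong M_2(\mathbb{F}_2)^m$ is conjugate nil clean. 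Hence $E$ and $F$ are conjugate via some unit $U\in U(M_2(R'))$, and this $U$ remains a unit of $M_2(R)$.

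No step presents a genuine obstacle: the whole argument simply packages the computations already made for $M_2(\mathbb{F}_2)$ in Theorem \ref{nil clean matrices over D} with the CRT description of finite Boolean rings and the intrinsic nature of idempotency, nilpotency and conjugation with respect to a subring.
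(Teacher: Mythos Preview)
Your argument is correct and follows essentially the same route as the paper's proof: pass to the finite Boolean subring generated by the relevant matrix entries, identify it with $\mathbb{F}_2^m$, and invoke the results already established for $M_n(\mathbb{F}_2)$ together with Proposition~\ref{prop product}. The paper additionally uses that a finite Boolean subring is a direct summand of $R$ to transfer the conjugation from $M_2(R')$ to $M_2(R)$, but as you implicitly observe this step is unnecessary, since a unit of the subring $M_2(R')$ is automatically a unit of $M_2(R)$.
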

\begin{proof}
 The statement (1) is exactly Corollary 6 of \cite{BD}.

For proving (2) we will extend arguments used in \cite{BD}. By (1), the ring  $M_2(R)$  is nil clean. We claim that it is conjugate nil clean.  Let $a=e+l=f+m\in M_2(R)$ be nil clean decompositions of $a$. Let $S$ be a the subring of $R$ generated by all entries of matrices appearing in the above equations. Then $S   $ is a finite  Boolean ring, so it is isomorphic to finite direct  product of copies of $\mathbb{F}_2$. Hence $M_n(S) $ is isomorphic to a finite product of copies of $M_2(\mathbb{F}_2)$ and
Theorem \ref{nil clean matrices over D} and Proposition \ref{prop product} yield that $M_2(S)$ is a  conjugate nil clean  ring.  Therefore  $e$ and $f$ are conjugate in $M_2(S)\subseteq M_2(R)$. Notice that every finite subring of a Boolean ring is a direct summand, so $R=S\oplus T$, for some subring $T$ of $R$. Then $M_2(R)=M_2(S)\oplus M_2(T)$ and it is clear  that $e$ and $f$ are conjugate  in $M_2(R)$, i.e. $M_2(R)$ is conjugate nil clean.

The reverse implication is given by Corollary \ref{size of matrces}(2), as Boolean rings are  of characteristic 2.
\end{proof}

\begin{prop} \label{thm conjugate clean}
 Let $I$ be an ideal of $R$ contained in $J(R)$ such that idempotents lift modulo $I$. Then $R$ is conjugate clean if and only if $R/I$ is conjugate clean.
\end{prop}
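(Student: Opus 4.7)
The plan is to prove both directions using the two parts of Lemma \ref{lifting conjugation} as the workhorses, with the idempotent-lifting hypothesis providing the bridge between clean decompositions of $R$ and those of $R/I$.

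\medskip

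\noindent\textbf{Direction $R$ conjugate clean $\Rightarrow$ $R/I$ conjugate clean.}
First I would check cleanness of $R/I$: any $\bar a \in R/I$ lifts to $a \in R$, which has a clean decomposition $a = e + u$ in $R$, giving $\bar a = \bar e + \bar u$ with $\bar e$ idempotent and $\bar u$ a unit. For conjugacy, suppose $\bar a = \bar e_1 + \bar v_1 = \bar e_2 + \bar v_2$ are two clean decompositions in $R/I$. Since idempotents lift modulo $I$, choose idempotents $e_1, e_2 \in R$ with $\bar e_i$ as their images. Then $\overline{a - e_i} = \bar v_i$ is a unit in $R/I$, so by Lemma \ref{lifting conjugation}(1) the element $u_i := a - e_i$ is a unit in $R$. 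Hence $a = e_1 + u_1 = e_2 + u_2$ are two clean decompositions in $R$, and conjugate cleanness of $R$ gives $e_1 = w e_2 w^{-1}$ for some $w \in U(R)$; reducing mod $I$ yields $\bar e_1 = \bar w \bar e_2 \bar w^{-1}$, as required.

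\medskip

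\noindent\textbf{Direction $R/I$ conjugate clean $\Rightarrow$ $R$ conjugate clean.}
For cleanness of $R$, take $a \in R$ and a clean decomposition $\bar a = \bar e + \bar v$ in $R/I$. Lift $\bar e$ to an idempotent $e \in R$; then $\overline{a-e} = \bar v$ is a unit in $R/I$, so Lemma \ref{lifting conjugation}(1) makes $a - e$ a unit in $R$, producing the clean decomposition $a = e + (a-e)$. For conjugacy, if $a = e_1 + u_1 = e_2 + u_2$ are two clean decompositions in $R$, then $\bar a = \bar e_1 + \bar u_1 = \bar e_2 + \bar u_2$ are clean decompositions in $R/I$ (units map to units). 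Conjugate cleanness of $R/I$ gives $\bar e_1, \bar e_2$ conjugate in $R/I$, and Lemma \ref{lifting conjugation}(2), whose hypotheses are exactly what we have assumed, promotes this to conjugacy of $e_1, e_2$ in $R$.

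\medskip

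\noindent There is no real obstacle here; the proof is essentially a two-line unpacking in each direction, relying crucially on $I \subseteq J(R)$ (so units lift, via Lemma \ref{lifting conjugation}(1)) and on the idempotent-lifting hypothesis (so idempotents lift to produce clean decompositions downstairs, and so conjugacy descends, via Lemma \ref{lifting conjugation}(2)). The only point to watch is that when one lifts an idempotent from a clean decomposition in $R/I$, one must use Lemma \ref{lifting conjugation}(1) to ensure the complementary summand $a - e$ is an \emph{actual} unit of $R$ rather than merely a unit modulo $I$.
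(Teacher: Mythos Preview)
Your proof is correct and follows essentially the same approach as the paper's: both directions rely on Lemma~\ref{lifting conjugation}(1) to lift units and Lemma~\ref{lifting conjugation}(2) to lift conjugacy of idempotents. In fact, your argument for the direction $R\Rightarrow R/I$ is more detailed than the paper's, which simply asserts that since idempotents and units lift modulo $I$, ``it is easy to see'' that $R/I$ is conjugate clean.
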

\begin{proof}
 Suppose $R/I$ is conjugate clean. Let $a\in R$ and $\bar a=\bar e +\bar v$ be clean decomposition of $\bar a$ in $R/I$. Since idempotents lift modulo $I$, Lemma \ref{lifting unit} implies that we may assume that $e$ is an idempotent and  $a=e+v+s$, where $v $ is a unit of $R$ and $s\in I\subseteq J(R)$. Then $v+s\in U(R)$ by Lemma  \ref{lifting unit} again, i.e. $a$ is a clean element of $R$. If $a=e+s=f+t$ are two clean presentations of $a$  then, by assumption $\bar e, \bar f$ are conjugate in $R/I$ and Lemma \ref{lifting conjugation} implies that $e,f$ are conjugate in $R$.

  Suppose now that $R$ is conjugate clean. Then clearly $R/I$ is clean. By assumptions imposed on $I$,   both idempotents and units lift modulo $I$. Using this,  it is easy to see    that the ring $R/I$ is conjugate clean.
\end{proof}

Suppose that  $T$ is an over ring of a ring $R$ such that $T=R\oplus I$, for some ideal $I$ of $T$. In this situation it is clear that idempotents lift modulo $I$. The results, which were obtained up to now,    give the following corollaries.

\begin{corollary}\label{constraction of cojugate clean} Suppose $T=R\oplus I$, where $I$ is an ideal of $T$ such that $J(I)=I$. Then:
 \begin{enumerate}
   \item[(1)]  $T$ is conjugate clean if and only if the ring $R$ is conjugate clean;
   \item[(2)] Suppose $\an{R}{I}=0$. Then  $T$ is uniquely  clean if and only if the ring $R$ is uniquely clean and  all its idempotents    commute with elements of $I$.
 \end{enumerate}
\begin{proof}
 The first statement is a direct consequence of Proposition \ref{thm conjugate clean}. The second one follows from Theorem \ref{thm1 central idempotents}, Remark \ref{remark conj clean + central idemp} and (1). \end{proof}\end{corollary}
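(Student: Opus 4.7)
For part (1), the plan is to invoke Proposition~\ref{thm conjugate clean} for the pair $(T,I)$, which requires $I\subseteq J(T)$ and that idempotents lift modulo $I$. The first hypothesis is immediate: since $J(I)=I$, every element of $I$ is quasi-regular, hence lies in the Jacobson radical of any containing ring. The second is essentially built into the decomposition $T=R\oplus I$, for any $t\in T$ splits uniquely as $t=r+s$ with $r\in R$, $s\in I$, and $t^2-t\in I$ forces $r^2-r\in R\cap I=0$, so $r$ itself is an idempotent of $R$ congruent to $t$ modulo $I$. Since $T/I\cong R$, Proposition~\ref{thm conjugate clean} then delivers (1) directly.

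For part (2), the plan is to zig-zag between the "uniquely clean" and "conjugate clean $+$ abelian" formulations via Remark~\ref{remark conj clean + central idemp}, using part (1) to transport conjugate cleanness between $R$ and $T$, and Theorem~\ref{thm1 central idempotents} (whose equivalence $(1)\Leftrightarrow(4)$ requires precisely $\an{R}{I}=0$) to transport the abelian property. In the forward direction, $T$ uniquely clean splits by Remark~\ref{remark conj clean + central idemp} into "$T$ conjugate clean" (which gives $R$ conjugate clean via part (1)) and "$T$ abelian". The latter forces $R$ to be abelian as well, since any idempotent of $R$ is an idempotent of $T$ and hence central in $T$, a fortiori in $R$; so Remark~\ref{remark conj clean + central idemp} applied downstairs gives $R$ uniquely clean. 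Moreover, "$T$ abelian" is condition (1) of Theorem~\ref{thm1 central idempotents}, so under $\an{R}{I}=0$ it yields condition (4): all idempotents of $R$ commute with elements of $I$.

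For the reverse direction in (2), assuming $R$ uniquely clean and that all idempotents of $R$ commute with elements of $I$, Remark~\ref{remark conj clean + central idemp} gives $R$ conjugate clean and abelian, so part (1) yields $T$ conjugate clean. The combination "$R$ abelian" together with "idempotents of $R$ commute with elements of $I$" is exactly condition (4) of Theorem~\ref{thm1 central idempotents}, and under $\an{R}{I}=0$ this implies condition (1), namely that $T$ is abelian. A final application of Remark~\ref{remark conj clean + central idemp} then shows $T$ uniquely clean. The only point requiring genuine care is bookkeeping around the hypothesis $\an{R}{I}=0$: it is invoked solely in (2), and only to turn the local statement "idempotents of $R$ commute with $I$" into the global statement "$T$ is abelian"; without it, the abelian conclusion for $T$ would be unreachable from the assumed data on $R$.
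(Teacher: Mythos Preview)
Your proof is correct and follows essentially the same route as the paper: for (1) you verify the hypotheses of Proposition~\ref{thm conjugate clean} (that $I\subseteq J(T)$ and idempotents lift modulo $I$, the latter being noted by the paper just before the corollary), and for (2) you unwind exactly the combination of Theorem~\ref{thm1 central idempotents}, Remark~\ref{remark conj clean + central idemp}, and part~(1) that the paper cites. One cosmetic remark: in the reverse direction of~(2), condition~(4) of Theorem~\ref{thm1 central idempotents} is just ``idempotents of $R$ commute with $I$''---the additional hypothesis ``$R$ abelian'' is not part of~(4), and in fact the proof of $(4)\Rightarrow(2)$ in that theorem derives it rather than assuming it---but this redundancy does no harm to your argument.
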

 \begin{corollary}\label{cor T+I}
\begin{enumerate}
\item[(1)] Let $UT_n(R)$ denote the ring of all $n$ by $n$ upper triangular matrices over $R$. Then $UT_n(R)$ is conjugate clean if and only if  $R$ is such;
\item[(2)] Let $\sigma$ be an endomorphism of a ring $R$ and $n\geq 2$.  If $T$ denotes one of the rings $R[[x;\sigma]]$, $R[x;\sigma]/(x^n)$, then:\\
 (i) $T$ is conjugate clean if and only if  $R$ is conjugate clean;\\
(ii) $T$ is uniquely clean if and only if  $R$ is uniquely clean and $\sigma(e)=e$, for every idempotent $e$ of $R$.
\end{enumerate}
\end{corollary}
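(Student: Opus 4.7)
The plan is to reduce both parts to Corollary \ref{constraction of cojugate clean} by exhibiting each ring $T$ as a direct sum $T = S \oplus I$ where $I$ is an ideal with $J(I) = I$, and then invoking auxiliary results (Proposition \ref{prop product} and Corollary \ref{idempotents in power series}) to translate the condition on $S$ into a condition on $R$.

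For (1), let $I$ denote the strict upper triangular matrices in $UT_n(R)$. Then $I$ is a nilpotent ideal, so $J(I) = I$, and $UT_n(R) = D \oplus I$ where $D \cong R^n$ is the diagonal subring. Corollary \ref{constraction of cojugate clean}(1) gives that $UT_n(R)$ is conjugate clean if and only if $R^n$ is conjugate clean, and Proposition \ref{prop product} then identifies this with $R$ being conjugate clean.

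For (2)(i), in both cases we have $T = R \oplus I$ with $J(I) = I$: when $T = R[[x;\sigma]]$ the ideal $I = Tx$ lies in $J(T)$ and $J(I) = I$ by the standard power series argument (already recorded in the proof of Corollary \ref{idempotents in power series}); when $T = R[x;\sigma]/(x^n)$ the ideal $I = T\bar x$ is nilpotent so $J(I) = I$. Applying Corollary \ref{constraction of cojugate clean}(1) directly yields that $T$ is conjugate clean if and only if $R$ is conjugate clean.

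For (2)(ii), I would apply Corollary \ref{constraction of cojugate clean}(2). Its hypothesis $\an{R}{I} = 0$ was verified in Corollary \ref{idempotents in power series} (the left annihilator of $I$ in $R$ is already zero in both $R[[x;\sigma]]$ and $R[x;\sigma]/(x^n)$). Thus $T$ is uniquely clean if and only if $R$ is uniquely clean and every idempotent of $R$ commutes with every element of $I$. It only remains to translate the commutation condition: as noted inside the proof of Corollary \ref{idempotents in power series}, an element $a\in R$ commutes with every element of $I$ if and only if $a$ is central in $R$ and $\sigma(a)=a$. Since idempotents in a uniquely clean ring are already central, the commutation condition collapses exactly to $\sigma(e)=e$ for every idempotent $e$ of $R$, completing the proof.

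No step is truly hard here; the main care is simply to recognize the correct direct sum decomposition for $UT_n(R)$ (where the complementary subring is $R^n$, not $R$, so Proposition \ref{prop product} is needed) and to reuse the annihilator and commutation observations already recorded in Corollary \ref{idempotents in power series} rather than re-proving them.
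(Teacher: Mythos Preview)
Your proposal is correct and follows essentially the same route as the paper's proof, which merely cites Corollary~\ref{constraction of cojugate clean} together with Proposition~\ref{prop product} for part~(1) and Corollaries~\ref{constraction of cojugate clean} and~\ref{idempotents in power series} for part~(2). You have simply unpacked these citations accurately, including the observation that centrality of idempotents in a uniquely clean ring reduces the commutation condition to $\sigma(e)=e$.
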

\begin{proof}
Corollary \ref{constraction of cojugate clean} and Proposition \ref{prop product} give the first statement. The statement (2) is a consequence of Corollaries \ref{constraction of cojugate clean} and  \ref{idempotents in power series}.
\end{proof}

Using the above, we can easily construct rings which are conjugate clean but are not uniquely clean.

\begin{example}      Let  $R$ be a uniquely clean ring.  The following rings are conjugate clean but they are not uniquely clean:  $UT_n(R)$ and    $R[[x;\sigma]]$, $R[x;\sigma]/(x^n)$, where $n\ge 2$ and  $\sigma$ is an endomorphism of $R$ such that there exists an idempotent  $e$ of $R$ with $\sigma(e)\ne e$.
                    \end{example}

The following theorem   offers   characterizations of conjugate clean rings. The statement (3) gives a way of constructing new  conjugate clean rings from  a  given conjugate clean ring. Clearly this construction generalizes the one from Corollary \ref{constraction of cojugate clean}.
\begin{theorem}\label{Cor description of conjugate clean}
The following conditions are equivalent:
 \begin{enumerate}
   \item[(1)] $R$ is conjugate clean;
   \item[(2)] $R/J(R)$ is conjugate clean and idempotents lift modulo $J(R)$.
\item[(3)]  There exist a conjugate clean subring $A$ of $R$ and a Jacobson radical ideal $I$ of $R$ such that:\\
    (i) $R=A+I$;\\
(ii)   $U(A)=U(R)\cap A$;\\
    (iii) every idempotent of $R$ is of the form $e+x$, for some $e=e^2\in A$ and $x\in I$.
 \end{enumerate}
\end{theorem}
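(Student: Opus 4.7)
The plan is to verify all three equivalences by repeatedly invoking Proposition \ref{thm conjugate clean} with appropriate ideals, bootstrapping from the Jacobson radical case to the more flexible subring-plus-radical-ideal description given in (3).

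For (1)$\Leftrightarrow$(2), I would apply Proposition \ref{thm conjugate clean} to the ideal $I=J(R)$. The only point to verify is that idempotents lift modulo $J(R)$ when $R$ is conjugate clean, and this is immediate from Lemma \ref{lemma 17 NZ} since a conjugate clean ring is, by definition, clean. For (1)$\Rightarrow$(3), I would take the trivial choice $A=R$ and $I=J(R)$; conditions (i), (ii), and (iii) then reduce to tautologies (for (iii), write every idempotent $e$ as $e+0$).

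The substantive direction is (3)$\Rightarrow$(1). First I would prove that $A\cap I\subseteq J(A)$: for $s\in A\cap I$ and $a\in A$, both $1-as$ and $1-sa$ lie in $A$ and differ from $1$ by an element of $I\subseteq J(R)$, hence are units in $R$; by (ii) they are then units in $A$, which forces $s\in J(A)$. Since $A$ is conjugate clean it is in particular clean, so Lemma \ref{lemma 17 NZ} tells us that idempotents lift modulo every ideal of $A$, and in particular modulo $A\cap I$. Applying Proposition \ref{thm conjugate clean} to the pair $(A,\,A\cap I)$ then yields that $A/(A\cap I)\cong (A+I)/I=R/I$ is conjugate clean. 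To invoke Proposition \ref{thm conjugate clean} once more for $(R,I)$, I still need idempotents to lift modulo $I$ in $R$: given $b+I$ idempotent in $R/I$, I would transport it through the above isomorphism to an idempotent of $A/(A\cap I)$, lift this to an idempotent $e\in A$ (again by cleanness of $A$), and note that $e\in A\subseteq R$ itself is the required lift. A final application of Proposition \ref{thm conjugate clean} now delivers that $R$ is conjugate clean.

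The main obstacle I anticipate is the first step of (3)$\Rightarrow$(1): one has to recognize that condition (ii) is exactly what is needed to trap $A\cap I$ inside $J(A)$, and without this containment neither the Proposition \ref{thm conjugate clean} applied inside $A$ nor the lifting argument inside $R$ can be carried out. Condition (iii), incidentally, turns out to be a formal consequence of (i), (ii), and the conjugate cleanness of $A$ (apply the lifting of idempotents modulo $A\cap I$ to any $R$-idempotent written as $a+x$ with $a\in A$, $x\in I$), so it functions chiefly as a structural clarification rather than as an independent hypothesis.
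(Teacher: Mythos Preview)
Your proof is correct, but for $(3)\Rightarrow(1)$ you take a genuinely different route from the paper. The paper argues directly at the level of elements: it first shows $R$ is clean by decomposing $r=a+x$ with $a\in A$, $x\in I$, then writing $a=e+u$ in $A$; for the conjugacy part it invokes condition~(iii) to replace the idempotent in an arbitrary clean decomposition of $r$ by a conjugate idempotent lying in $A$, and then manipulates the two resulting decompositions until they both live in $A$, where the conjugate cleanness of $A$ finishes the job. Your approach is more structural: you observe that (ii) forces $A\cap I\subseteq J(A)$, apply Proposition~\ref{thm conjugate clean} inside $A$ to conclude that $R/I\cong A/(A\cap I)$ is conjugate clean, verify lifting of idempotents modulo $I$ in $R$ via the same isomorphism, and then apply Proposition~\ref{thm conjugate clean} once more to $(R,I)$. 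What your approach buys is the pleasant byproduct you flag at the end: condition~(iii) is never used, and indeed follows from (i), (ii), and the cleanness of $A$. The paper's elementwise argument, on the other hand, makes the role of (iii) transparent and avoids the isomorphism bookkeeping; it also serves as a template for the analogous nil clean statement (Proposition~\ref{thm conjugate nil clean}), where the author reuses the same manipulations almost verbatim.
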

\begin{proof}
 The  equivalence $(1)\Leftrightarrow (2)$ is a direct consequence of Proposition \ref{thm conjugate clean} and Lemma \ref{lemma 17 NZ}.

Taking $A=R$ and $I=0$, one gets $(1)\Rightarrow (3)$.

$(3)\Rightarrow (1)$ Let $A$ and $I$ be as in $(3)$ and $r\in R$. Then $r=a+x$, for some $a\in A$ and $x\in I$. Since $A$ is   clean, there exist $e=e^2\in A$ and $u\in U(A)\subseteq U(R)$ such that $a=e+u$. By assumption $I\subseteq J(R)$, so $u+x$ is invertible in $R$ and $r=e+(u+x)$ is a clean decomposition of $r$. This shows that $R$ is a clean ring.

Let $r=e+u$ be a clean decomposition of $r\in R$. By $(iii)$, $e=e_0+x$, for an idempotent $e_0 \in A$ and $x\in I\subseteq J(R)$. Proposition \ref{Cor idempotents} shows that $e$ and $e_0$ are conjugate. Moreover $r=e_0+(u+x)$ is a clean decomposition of $r$.
Let us consider two clean decompositions of $r$, say $r=e+u=f+v$. By the above, up to conjugation of idempotents, we may assume that $e,f\in A$, $u,v\in U(R)$. Let $u_0\in A$ and $x\in I\subseteq J(R)$ be such that $u=u_0+x$. Then $u-x=u_0\in U(R)\cap A=U(A)$.
Similarly, there exist $v_0\in U(A)$ and $y\in I$ such that  $v=v_0+y$. Then $r-x=f+v_0+y-x=e+u_0\in A$.
In particular, we get $y-x\in A\cap I$ and $v_0+y-x\in U(R)\cap A=U(A)$. Therefore, as $A$ is conjugate clean and $ f+(v_0+y-x)=e+u_0 $ are clean decompositions of $r-x\in A$, $e$ and $f$ are conjugate. This shows that $R$ is conjugate clean.
\end{proof}

 Notice that if an element $a\in R$ can be written in a form $a=e+t$, where $e=e^2$ and $t\in J(R)$, then $a=(1-e)+ (2e-1)+t$. Since $t\in J(R)$ and $2e-1$ is a unit, the above  equation shows that $a$ is a clean element. Therefore rings in which every element can be presented as a sum of an idempotent and an element from $J(R)$ form a natural proper subclass of clean rings. We call such rings $J$-clean rings.

Making use of Lemma \ref{lemma 17 NZ}, one can easily  check that  $R$ is $J$-clean if and only if $R/J(R)$ is a Boolean ring  and idempotents lift modulo $J(R)$.   Uniquely clean rings were characterized in \cite{NZ}, as rings  $R$  such that $R/J(R)$ is Boolean and idempotents lift uniquely modulo $J(R)$. Therefore, the class of  uniquely clean rings is contained in the class of $J$-clean ring.  The inclusion is strict, since uniquely clean rings are abelian. Notice that, by  Theorem \ref{Cor description of conjugate clean} we get:
 \begin{corollary}
  Every $J$-clean ring is conjugate clean.
 \end{corollary}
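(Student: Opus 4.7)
The plan is to reduce to the characterization of conjugate clean rings in terms of the factor by the Jacobson radical, which is exactly the equivalence $(1)\Leftrightarrow (2)$ in Theorem \ref{Cor description of conjugate clean}. So I would start from the characterization of $J$-clean rings stated in the paragraph just before the corollary: if $R$ is $J$-clean then $R/J(R)$ is Boolean and idempotents lift modulo $J(R)$. Hence, by Theorem \ref{Cor description of conjugate clean}, it suffices to show that every Boolean ring is conjugate clean.

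For the Boolean case, I would use that any Boolean ring $B$ is in fact uniquely clean, a fact that the paper already invokes. One can also check it directly in one line: the only unit of $B$ is $1$ (any unit satisfies $u=u^2$, so $u=1$), hence the only possible clean decomposition of $b\in B$ is $b=(b-1)+1$. So $B$ is uniquely clean, and in particular conjugate clean. Since conjugate clean decompositions are unique up to conjugation of the idempotent part, this gives what is needed.

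Combining the two steps, the application of the equivalence $(1)\Leftrightarrow (2)$ of Theorem \ref{Cor description of conjugate clean} to $I=J(R)$ concludes the argument: $R/J(R)$ is conjugate clean (being Boolean, indeed uniquely clean) and idempotents lift modulo $J(R)$ by the assumption that $R$ is $J$-clean, so $R$ itself is conjugate clean.

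There is essentially no obstacle; the only subtlety is recognizing that the corollary is a direct consequence of Theorem \ref{Cor description of conjugate clean} once one observes that Boolean rings are (uniquely, hence) conjugate clean. The reader is probably expected to supply this short argument without further comment, which is presumably why the authors leave it as a one-line corollary.
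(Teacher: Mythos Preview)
Your argument is correct and is exactly the approach the paper intends: use the characterization of $J$-clean rings (so $R/J(R)$ is Boolean and idempotents lift modulo $J(R)$), observe that Boolean rings are uniquely clean hence conjugate clean, and then apply the equivalence $(1)\Leftrightarrow(2)$ of Theorem~\ref{Cor description of conjugate clean}. The only cosmetic point is that Theorem~\ref{Cor description of conjugate clean} is already stated for $J(R)$, so there is no parameter $I$ to specialize.
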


The remaining part of this section is focused on properties of conjugate nil clean rings.

Let $I$ be a nil ideal of a ring $R$. Then $h\in R$ is nilpotent if and only it $\bar h\in R/I$ is such. In particular,  if $h\in R$ is nilpotent,  then all elements from the coset $h+I$ are also nilpotent.   Using this observation and arguments similar to that of Proposition \ref{thm conjugate clean} and Theorem \ref{Cor description of conjugate clean} we can  prove the following:

\begin{prop}\label{thm conjugate nil clean}
 Let $I$ be a nil ideal of $R$. The following conditions are equivalent:
 \begin{enumerate}
   \item[(1)] $R$ is conjugate nil clean;
   \item[(2)] $R/I$ is conjugate nil clean;
\item[(3)] There exists a conjugate nil clean subring $A$ of $R$ such that:\\
(i) $R=A+I$;\\
(ii)  every idempotent of $R$ is of the form $e+x$, where $e=e^2\in A$ and $x\in I$.
 \end{enumerate}
\end{prop}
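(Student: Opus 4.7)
The plan is to mirror the arguments of Proposition~\ref{thm conjugate clean} and Theorem~\ref{Cor description of conjugate clean}, replacing the role of containment in $J(R)$ with the hypothesis that $I$ is nil. Two facts will be used throughout: (a) any nil ideal $I$ satisfies $I\subseteq J(R)$ and idempotents lift modulo $I$, so Lemma~\ref{lifting unit} applies; (b) as noted in the paragraph preceding the statement, $h\in R$ is nilpotent if and only if $\bar h\in R/I$ is nilpotent.

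For (1)$\Leftrightarrow$(2) I would follow the pattern of Proposition~\ref{thm conjugate clean}. To prove (2)$\Rightarrow$(1): given $a\in R$ and a nil clean decomposition $\bar a=\bar e+\bar l$ in $R/I$, lift $\bar e$ to an idempotent $e\in R$; then $a-e$ is nilpotent by (b), so $a=e+(a-e)$ is a nil clean decomposition in $R$. Given two such decompositions $a=e+l=f+m$, project to $R/I$, obtain conjugacy of $\bar e$ and $\bar f$ from the hypothesis, and pull it back to $R$ via Lemma~\ref{lifting unit}(2). The reverse (1)$\Rightarrow$(2) is symmetric: lift two nil clean decompositions from $R/I$ to $R$ using (b), invoke (1) for conjugacy of the lifted idempotents in $R$, and project.

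For (1)$\Leftrightarrow$(3): the implication (1)$\Rightarrow$(3) is immediate via $A=R$ (condition (ii) becomes trivial). For (3)$\Rightarrow$(1), the cleanest route is to reduce to the already-established equivalence (1)$\Leftrightarrow$(2). Since $R=A+I$, the composition $A\hookrightarrow R\to R/I$ is surjective with kernel $A\cap I$, which is a nil ideal of $A$; hence $A/(A\cap I)\cong R/I$. Applying (1)$\Rightarrow$(2) to the conjugate nil clean ring $A$ and its nil ideal $A\cap I$ shows that $A/(A\cap I)\cong R/I$ is conjugate nil clean, and then (2)$\Rightarrow$(1) applied to $R$ with ideal $I$ gives the conclusion.

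The main obstacle, as in Theorem~\ref{Cor description of conjugate clean}, is the conjugacy half of (2)$\Rightarrow$(1): lifting a conjugacy of idempotents from $R/I$ back to $R$. This is where the nil ideal hypothesis does its real work, providing both $I\subseteq J(R)$ and liftability of idempotents, so that Proposition~\ref{Cor idempotents} applies through Lemma~\ref{lifting unit}(2). It is worth noting that the shortcut route to (3)$\Rightarrow$(1) never uses condition (ii); it holds automatically in the $A=R$ instance and is redundant for the implication itself, though it matches the parallel form of Theorem~\ref{Cor description of conjugate clean}.
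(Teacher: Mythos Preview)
Your argument is correct. For $(1)\Leftrightarrow(2)$ and for $(1)\Rightarrow(3)$ you follow exactly the paper's route. For $(3)\Rightarrow(1)$ the paper proceeds differently: it gives a direct argument that genuinely uses condition~(ii). Given a nil clean decomposition $r=e+l$ in $R$, the paper invokes~(ii) to write $e=e_0+x$ with $e_0=e_0^2\in A$ and $x\in I$, replaces $e$ by the conjugate idempotent $e_0$ via Proposition~\ref{Cor idempotents}, and in this way reduces to comparing two nil clean decompositions whose idempotents lie in $A$; a short computation then shows these are nil clean decompositions of a common element of $A$, so conjugacy follows from the hypothesis on $A$.

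Your shortcut through the isomorphism $A/(A\cap I)\cong R/I$, applying the already established $(1)\Rightarrow(2)$ to $A$ and then $(2)\Rightarrow(1)$ to $R$, is shorter and, as you point out, never touches condition~(ii). This is a genuine observation: in the nil setting condition~(ii) is redundant for the equivalence, whereas in the parallel Theorem~\ref{Cor description of conjugate clean} the extra hypotheses on idempotents and units are doing real work (idempotent lifting over a Jacobson radical ideal is not automatic). The paper's direct proof has the minor virtue of being self-contained and of explaining why~(ii) was recorded at all; your approach has the virtue of exposing that~(ii) comes for free here.
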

\begin{proof}
 $(1)\Rightarrow (2)$   Suppose   that $R$ is conjugate nil clean. Then clearly $R/I$ is nil clean. Let $\bar a=\bar e +\bar l=\bar f+\bar m$ be nil clean decompositions of $\bar a$. Since $I$ is nil, idempotents lift modulo $I$, so we may assume that $e,f$ are idempotents of $R$. Clearly  elements $l,m$ are nilpotent as $\bar l, \bar m$ are such. Hence, in $R$, we can write $a=e+l+s=f+m+t$ for some suitable $s,t\in I$. Then the elements $l+s$ and $m+t$ are also nilpotent and the fact that  $R$ is conjugate clean yields that $e$ and $f$ are conjugate in $R$. Thus  $\bar e$ and $\bar f$ are conjugate in $R/I$, i.e. $R/I$ is conjugate clean.

 $(2)\Rightarrow (1)$ Suppose $R/I$ is conjugate nil clean. Let $a\in R$. Then, by assumption, we have nil clean decomposition $\bar a=\bar e +\bar l$ in $R/I$. Since idempotents lift modulo $I$, we may assume that $e$ is an idempotent and  $a=e+l+s$, where $l+s$ is nilpotent as $l$ is such and $s\in I$. This shows that $a$ is a nil clean element of $R$. If $a=e+l=f+m$ are two nil clean presentations of $a$  then, by assumption, $\bar e, \bar f$ are conjugate in $R/I$ and Lemma \ref{lifting conjugation} implies that $e,f$ are conjugate in $R$.

$(3)\Rightarrow (1)$ Let $R=A+I$, where $A$ is as in (3). Then  every   $r\in R$ can be presented in a form
$r=a+x=e+l+x$, where $a\in A$, $x\in I$ and $a=e+l$ is a nil clean decomposition of $a$ in $A$.  Notice that $l+x$ is a nilpotent element as $l$ is nilpotent and $I$ is a nil ideal. This proves that $R$ is nil clean.

Let $r=e+l$ be a nil clean decomposition of $r\in R$. Using $(ii)$, we may pick  an idempotent  $e_0\in A$ and $x\in I$ such that $e=e_0+x$. Proposition \ref{Cor idempotents} shows that $e$ and $e_0$ are conjugate in $R$. Moreover $r=e_0+(l+x)$ is a nil clean decomposition of $r$. This means that, considering  nil clean  decompositions $e+l$ of $r\in R$ up to conjugation of idempotents,   we may assume that $e\in A$.

Let $r=e+l=f+m$ be two clean decompositions of $r\in R$. By the above,  we may assume that $e,f\in A$. Let $l_0\in A$ and $x\in I$ be such that $l=l_0+x$. Then $l_0=l-x$ is nilpotent, as $l$ is nilpotent and $I$ is nil.
Similarly, there exist a nilpotent element  $m_0\in A$ and $y\in I$ such that  $m=m_0+y$. Then $r-x=f+m_0+y-x=e+l_0\in A$.
In particular, we get $y-x\in A\cap I$ and $m_0+y-x\in A$ is nilpotent. Therefore, as $A$ is conjugate nil clean and $ f+(m_0+y-x)=e+l_0 $ are two clean decompositions of $r-x$ in $A$, $e$ and $f$ are conjugate in $A$ so in $R$ as well.  This shows that $R$ is conjugate nil clean.

Taking $A=R$  one gets $(1)\Rightarrow (3)$.
\end{proof}
 The first application of the above proposition requires the following observation.
\begin{lemma} \label{lem. reduction to F_2 algebras}
 Let $R$ be a nil clean ring. Then $2R$ is a nilpotent ideal of $R$. In particular $R/2R$ has a structure of $\mathbb{F}_2$-algebra.
\end{lemma}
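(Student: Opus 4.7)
The plan is to exploit the nil clean decomposition of the specific element $2 \in R$ and use the fact that $2$ lies in the center of $R$.

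First, apply the nil clean hypothesis to the element $2$: write $2 = e + n$ with $e = e^{2}$ and $n$ nilpotent. Since $e = 2 - n$ is an idempotent and $2$ is central (so it commutes with $n$), the relation $(2 - n)^{2} = 2 - n$ expands to $n^{2} - 3n + 2 = 0$, which factors as
\[
(n-1)(n-2) = 0.
\]
Because $n$ is nilpotent, $1 - n$ is a unit of $R$, and therefore so is $n - 1 = -(1-n)$. Multiplying the displayed equation on the left by $(n-1)^{-1}$ yields $n - 2 = 0$, i.e.\ $n = 2$. Hence $2$ is itself a nilpotent element of $R$; pick $k \geq 1$ with $2^{k} = 0$.

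Next, upgrade this to the nilpotence of the ideal $2R$. Since $2 \in R$ is central, any product $(2r_{1})(2r_{2})\cdots(2r_{k})$ with $r_{i} \in R$ equals $2^{k} r_{1}r_{2}\cdots r_{k} = 0$, so $(2R)^{k} = 0$. Thus $2R$ is a nilpotent ideal of $R$. Consequently $2 \cdot 1 = 0$ in the factor ring $R/2R$, so the prime subring of $R/2R$ is a copy of $\mathbb{F}_{2}$ sitting in its center, giving $R/2R$ the structure of an $\mathbb{F}_{2}$-algebra.

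The only place that requires any real idea is the passage from the nil clean decomposition of $2$ to the equation $(n-1)(n-2) = 0$ and the observation that $n-1$ is a unit; once $2$ is shown to be nilpotent, the centrality of $2$ makes the ideal-level statement immediate and the $\mathbb{F}_{2}$-algebra conclusion is automatic.
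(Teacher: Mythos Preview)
Your proof is correct and follows essentially the same strategy as the paper: decompose $2=e+n$ and show that in fact $e=0$, so $2=n$ is nilpotent. The paper's computation is marginally shorter --- it multiplies $2=e+l$ by $e$ to get $e+e=e+le$ (and similarly on the other side), hence $e=le=el$, and then $e=el=el^{2}=\cdots=0$ --- but your quadratic-factorization argument is equally valid and the remaining steps (nilpotence of $2R$ via centrality, $\mathbb{F}_2$-algebra structure) are identical.
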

\begin{proof}
  Let
 $2=e+l$ be nil clean decomposition of $2$. Then $e+el=2e=e+le$. Hence $le=e=el$ and, as $l$ is nilpotent, we obtain $e=0$, i.e. $2=l$ is nilpotent.  Thus $ 2R$ is a nilpotent ideal of $R$ and the ring $\hat R=R/2R$ has a structure of an $\mathbb{F}_2$-algebra, as required.
\end{proof}
\begin{prop}\label{there is no conj nil clean matrices}
 For any ring $R$ and $n\ge 3$, the ring $M_n(R)$ is not conjugate nil clean.
\end{prop}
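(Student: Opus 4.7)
The plan is to reduce the general statement to the already-treated characteristic $2$ case of Corollary \ref{size of matrces}(2), by factoring out a suitable nil ideal and applying Proposition \ref{thm conjugate nil clean}.

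Arguing by contradiction, I would suppose that $M_n(R)$ is conjugate nil clean for some $n\geq 3$ and some (nonzero) ring $R$. Then $M_n(R)$ is in particular nil clean, so Lemma \ref{lem. reduction to F_2 algebras} applied to $M_n(R)$ gives that $2M_n(R)=M_n(2R)$ is a nilpotent ideal. Since $M_n(2R)^k\subseteq M_n((2R)^k)$, this forces $2R$ itself to be nilpotent in $R$, and so $R/2R$ carries the structure of an $\mathbb{F}_2$-algebra. Moreover, the canonical isomorphism $M_n(R)/M_n(2R)\cong M_n(R/2R)$ identifies the quotient with the matrix ring over this $\mathbb{F}_2$-algebra.

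Next I would apply the implication $(1)\Rightarrow(2)$ of Proposition \ref{thm conjugate nil clean} with the nil ideal $I=M_n(2R)$: conjugate nil cleanness of $M_n(R)$ descends to $M_n(R/2R)$. But $R/2R$ has characteristic $2$ and $n\geq 3$, so Corollary \ref{size of matrces}(2) gives the desired contradiction, namely that $M_n(R/2R)$ is \emph{not} conjugate nil clean — provided $R/2R\neq 0$. This is the only point requiring care: if $R/2R=0$, then $1\in 2R$, so $2$ is a unit in $R$; but $2$ is nilpotent in $R$, which forces $R=0$, the trivial case excluded at the outset.

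The proof is then essentially bookkeeping, and there is no real obstacle: all of the conceptual content sits in Lemma \ref{lem. reduction to F_2 algebras} (nilpotence of $2R$ for nil clean $R$), Proposition \ref{thm conjugate nil clean} (conjugate nil cleanness is preserved modulo nil ideals), and the explicit $3\times 3$ upper-triangular counterexample over $\mathbb{F}_2$ already exhibited in the proof of Theorem \ref{nil clean matrices over D}. The task is just to glue these three inputs together.
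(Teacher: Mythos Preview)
Your argument is correct and matches the paper's proof exactly: apply Lemma~\ref{lem. reduction to F_2 algebras} to see that $M_n(2R)$ is nilpotent, pass to the quotient $M_n(R/2R)$ via Proposition~\ref{thm conjugate nil clean}, and invoke Corollary~\ref{size of matrces}(2). The detour showing that $2R$ itself is nilpotent (where, incidentally, the inclusion $M_n(2R)^k\subseteq M_n((2R)^k)$ is stated in the direction that does not help---you need the reverse inclusion, which also holds) and the $R/2R=0$ edge case are both unnecessary but harmless additions.
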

\begin{proof}
Let $n\ge 3$. Assume that $R$ is a ring such that $M_n(R)$ is conjugate nil clean. By Lemma \ref{lem. reduction to F_2 algebras}, $I=2M_n(R)=M_n(2R)$ is a nilpotent ideal of $M_n(R)$. Thus, Proposition \ref{thm conjugate nil clean} implies that  $M_n(R)/I\simeq M_n(R/2R)$ is conjugate nil clean. Corollary \ref{size of matrces}(2) shows that this is impossible, as  $R/2R$ is of characteristic 2. Thus   such $R$ can not exist.
\end{proof}
It is known (Proposition 3.16 \cite{D}) that if $R$ is nil clean then $J(R)$ is nil. In particular $R$ is nil clean if and only if $R/J(R)$ is nil clean and $J(R)$ is nil. Proposition \ref{thm conjugate nil clean} gives the following characterization of conjugate nil clean rings.
\begin{corollary}\label{cor. conjugate nil clean}
 For a ring $R$, the following conditions are equivalent:
 \begin{enumerate}\label{cor. conj. nil clean characterization}
   \item[(1)]    $R$ is conjugate nil clean;
   \item[(2)]   $J(R)$ is nil and  $R/J(R)$ is conjugate nil clean;
\item[(3)]   J(R) is  nil   and there exist a conjugate nil clean subring $A$ of $R$ such that:\\
    (i) $R=A+J(R)$;\\
(ii) every idempotent of $R$ is of the form $e+x$, for some $e=e^2\in A$ and $x\in J(R)$.
 \end{enumerate}
\end{corollary}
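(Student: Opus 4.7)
The plan is to derive this corollary as a direct specialization of Proposition \ref{thm conjugate nil clean} to the nil ideal $I = J(R)$, combined with the cited fact (Proposition 3.16 of \cite{D}) that $J(R)$ is nil whenever $R$ is nil clean. The crucial point to verify first is that in each of (1), (2), (3), the Jacobson radical $J(R)$ is genuinely a nil ideal of $R$, so that Proposition \ref{thm conjugate nil clean} becomes applicable with $I = J(R)$; after that the equivalences almost write themselves.

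For $(1) \Rightarrow (2)$, I would observe that a conjugate nil clean ring is, in particular, nil clean, so Proposition 3.16 of \cite{D} gives that $J(R)$ is nil. With $I = J(R)$ being a nil ideal, Proposition \ref{thm conjugate nil clean} $(1) \Rightarrow (2)$ yields that $R/J(R)$ is conjugate nil clean. For the converse $(2) \Rightarrow (1)$, the hypothesis that $J(R)$ is nil lets us apply Proposition \ref{thm conjugate nil clean} $(2) \Rightarrow (1)$ with $I = J(R)$ to conclude that $R$ itself is conjugate nil clean.

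The equivalence $(1) \Leftrightarrow (3)$ is obtained in the same way via Proposition \ref{thm conjugate nil clean} $(1) \Leftrightarrow (3)$ with $I = J(R)$: the forward implication is obtained by taking $A = R$, in which case $J(R)$ is nil by the argument in the previous paragraph and both conditions $(i)$ and $(ii)$ are trivial; the reverse implication uses $J(R)$ nil (given in (3)) so that Proposition \ref{thm conjugate nil clean}(3) applies verbatim with $I = J(R)$ and the prescribed subring $A$, yielding conjugate nil cleanness of $R$.

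The only point requiring any care — and hence the closest thing to an obstacle — is lining up the hypotheses of Proposition \ref{thm conjugate nil clean} (which assumes from the outset that $I$ is a nil ideal) with the statement of the corollary (where ``$J(R)$ is nil'' is embedded in the conditions themselves rather than stated as a blanket hypothesis). To handle this cleanly I would begin the proof by remarking that in all three conditions $J(R)$ is nil — for (1) by the cited proposition of \cite{D}, and for (2) and (3) by assumption — so that throughout the proof one may freely invoke Proposition \ref{thm conjugate nil clean} with $I = J(R)$. After this observation the three equivalences reduce to the corresponding equivalences of Proposition \ref{thm conjugate nil clean} applied to this particular $I$, and no further calculation is needed.
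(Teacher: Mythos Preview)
Your approach is correct and matches the paper's exactly: the paper simply remarks that $J(R)$ is nil whenever $R$ is nil clean (Proposition~3.16 of \cite{D}) and then invokes Proposition~\ref{thm conjugate nil clean} with $I=J(R)$. Your write-up is in fact more explicit than the paper's one-line justification, but the underlying argument is identical.
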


Proposition 3.18 \cite{D} states that if $R$ is a nil clean, abelian ring,    then $J(R)$ contains all nilpotent elements of $R$. Using Corollary \ref{cor. conjugate nil clean} and Remark \ref{remark conj clean + central idemp} one can easily recover (cf. Theorem 5.9\cite{D}) the following characterization of uniquely nil clean rings:
\begin{corollary} \label{nil clean char} For a ring $R$, the following statements are equivalent:
\begin{enumerate}
  \item[(1)]  $R$ is uniquely nil clean;
  \item[(2)]  $R/J(R)$ is Boolean, $J(R)$ is nil and idempotents  lift uniquely modulo $J(R)$.
 \end{enumerate}

\end{corollary}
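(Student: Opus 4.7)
The plan is to deduce both implications by combining Corollary \ref{cor. conjugate nil clean}, Remark \ref{remark conj clean + central idemp}, and Proposition 3.18 of \cite{D}, which states that in a nil clean abelian ring every nilpotent element lies in $J(R)$.

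For $(1)\Rightarrow (2)$, I would first invoke Remark \ref{remark conj clean + central idemp} to see that a uniquely nil clean ring $R$ is conjugate nil clean and abelian. Corollary \ref{cor. conjugate nil clean} then yields that $J(R)$ is nil and $R/J(R)$ is conjugate nil clean. Since $R$ is abelian, so is $R/J(R)$, hence Remark \ref{remark conj clean + central idemp} applied to $R/J(R)$ gives that $R/J(R)$ is uniquely nil clean. Proposition 3.18 of \cite{D} applied to $R/J(R)$ now implies that its only nilpotent element is $0$, so in the unique decomposition $\bar a=\bar e+\bar l$ one must have $\bar l=0$; thus every element of $R/J(R)$ is an idempotent, i.e.\ $R/J(R)$ is Boolean. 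Existence of lifting of idempotents modulo $J(R)$ follows from $J(R)$ being nil. For uniqueness, given idempotents $e,f\in R$ with $e-f\in J(R)$, the element $e-f$ is nilpotent, so $e=e+0=f+(e-f)$ are two nil clean decompositions of $e$; uniqueness forces $e=f$.

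For $(2)\Rightarrow (1)$, a Boolean ring is trivially uniquely nil clean (its only nilpotent is $0$), hence conjugate nil clean, so Corollary \ref{cor. conjugate nil clean} gives that $R$ itself is conjugate nil clean. By Remark \ref{remark conj clean + central idemp} it then remains to show that $R$ is abelian. For this I would use the unique lifting hypothesis in a Peirce-type trick: for an idempotent $e\in R$ and any $r\in R$, the element $f=e+er(1-e)$ satisfies $f^2=f$ by direct computation, while $\bar f=\bar e$ in the commutative ring $R/J(R)$, so $f-e\in J(R)$. Both $e$ and $f$ are then idempotent lifts of $e$ modulo $J(R)$, and unique lifting forces $f=e$, i.e.\ $er(1-e)=0$. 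A symmetric argument applied to $e+(1-e)re$ gives $(1-e)re=0$, and together these yield $er=re$, showing that every idempotent of $R$ is central.

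The main obstacle I expect is the step in $(2)\Rightarrow (1)$ where the formally weak unique-lifting hypothesis has to be converted into commutativity of each idempotent with every element of $R$; the Peirce-type trick above is the essential move. By contrast, $(1)\Rightarrow (2)$ is largely a bookkeeping exercise in unfolding the definitions through Corollary \ref{cor. conjugate nil clean} and Proposition 3.18 of \cite{D}.
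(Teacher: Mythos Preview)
Your argument is correct and follows exactly the route the paper indicates: it combines Corollary~\ref{cor. conjugate nil clean}, Remark~\ref{remark conj clean + central idemp}, and Proposition~3.18 of \cite{D}. The paper merely sketches this (``one can easily recover''), while you supply the details; in particular the Peirce idempotent $f=e+er(1-e)$ used to extract abelianness from unique lifting is the standard move and is implicit in the paper's reference to Theorem~5.9 of \cite{D}.
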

 We will use the above characterization in the proof of Theorem \ref{koethe}.

Applying Proposition \ref{thm conjugate nil clean},  Remark \ref{remark conj clean + central idemp} and Theorem \ref{thm1 central idempotents} we obtain the following:
\begin{corollary}
Suppose $T=R\oplus I$, where $I$ is a nil ideal of $T$. Then:
 \begin{enumerate}
   \item[(1)]  $T$ is conjugate nil clean if and only if the ring $R$ is conjugate nil clean;
   \item[(2)] Suppose $\an{R}{I}=0$. Then $T$ is uniquely nil clean if and only if the ring $R$ is uniquely nil clean and  all idempotents of $R$  commute with elements of $I$.
 \end{enumerate}
\end{corollary}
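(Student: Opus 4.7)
The plan is to assemble parts (1) and (2) from three results already established in the paper: Proposition \ref{thm conjugate nil clean}, Remark \ref{remark conj clean + central idemp}, and Theorem \ref{thm1 central idempotents}. There is essentially nothing new to verify; the whole proof is a packaging of previous equivalences.

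For part (1), I would apply Proposition \ref{thm conjugate nil clean} with the ambient ring taken to be $T$ and with nil ideal $I$ of $T$. That proposition gives that $T$ is conjugate nil clean if and only if $T/I$ is conjugate nil clean. The decomposition $T = R\oplus I$ forces the projection of $T$ onto $R$ along $I$ to be a ring homomorphism identifying $T/I$ with $R$, and the equivalence of part (1) is immediate.

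For part (2), the first observation is that the nil ideal $I$ is contained in $J(T)$ and satisfies $J(I)=I$ (as a non-unital ring, $I$ is its own Jacobson radical because all its elements are quasi-regular), so Theorem \ref{thm1 central idempotents} applies under the extra assumption $\an{R}{I}=0$. That theorem then yields the equivalence: $T$ is abelian if and only if every idempotent of $R$ commutes with every element of $I$. By Remark \ref{remark conj clean + central idemp}, $T$ is uniquely nil clean if and only if $T$ is simultaneously conjugate nil clean and abelian. Chaining these with part (1), I obtain that $T$ is uniquely nil clean if and only if $R$ is conjugate nil clean and every idempotent of $R$ commutes with $I$. To finish, I would note that under the commutation condition $T$ is abelian (by Theorem \ref{thm1 central idempotents}), hence its subring $R$ is abelian as well; so Remark \ref{remark conj clean + central idemp} upgrades ``$R$ conjugate nil clean'' to ``$R$ uniquely nil clean''. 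The reverse implication is trivial since every uniquely nil clean ring is conjugate nil clean.

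I do not foresee a real obstacle: the statement is a straightforward repackaging and the only points worth pausing over are the verifications that the hypotheses of the cited results are met, namely that $I\subseteq J(T)$ and $J(I)=I$ (both immediate from $I$ being nil) and that the decomposition $T=R\oplus I$ really gives $T/I\cong R$ as rings. Once these are in hand, the two equivalences drop out of the combination of Proposition \ref{thm conjugate nil clean}, Theorem \ref{thm1 central idempotents}, and Remark \ref{remark conj clean + central idemp}.
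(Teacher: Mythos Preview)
Your proposal is correct and follows exactly the route the paper takes: the corollary is stated as an immediate consequence of Proposition~\ref{thm conjugate nil clean}, Remark~\ref{remark conj clean + central idemp}, and Theorem~\ref{thm1 central idempotents}, and you have unpacked precisely how those three results combine. The only additions you supply are the routine verifications (that $I$ nil gives $J(I)=I$ and $T/I\cong R$, and that $T$ abelian forces the subring $R$ to be abelian so that ``conjugate nil clean'' upgrades to ``uniquely nil clean''), all of which are sound.
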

With the help of the above corollary, similarly as in  Corollary \ref{cor T+I}, we get:
\begin{corollary}
 \begin{enumerate}

\item[(1)] Let $UT_n(R)$ denote the ring of all $n$ by $n$ upper triangular matrices over $R$. Then:\\ (i) $UT_n(R)$ is conjugate nil clean if and only if  $R$ is such;\\
(ii) $UT_n(R)$ is not uniquely nil clean when $n\ge 2$.

\item[(2)] Let $\sigma$ be an endomorphism of a ring $R$ and $n\geq 2$. Then:\\
 (i)   $R[x;\sigma]/(x^n)$ is conjugate nil clean if and only if  $R$ is conjugate nil clean;\\
(ii) $R[x;\sigma]/(x^n)$ is uniquely  nil clean if and only if  $R$ is uniquely clean and $\sigma(e)=e$, for every idempotent $e$ of $R$;
    \end{enumerate}
\end{corollary}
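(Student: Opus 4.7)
The plan is to realize both target rings in the form $T = S \oplus I$ with $I$ a nil ideal and then invoke the previous corollary (together with Proposition \ref{prop product} in the matrix case). In both constructions the candidate complement $I$ is nilpotent, so the hypothesis ``$I$ nil'' is automatic, and in both cases $\an{S}{I} = 0$ will hold, which is what is needed to push through the uniquely nil clean parts.

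For (1), let $T = UT_n(R)$, write $D$ for the subring of diagonal matrices, and let $I$ denote the ideal of strictly upper triangular matrices. Then $I^n = 0$, $T = D \oplus I$ and $D \cong R \times \cdots \times R$ ($n$ copies). The first statement of the previous corollary combined with Proposition \ref{prop product} yields (i). For (ii), assume $n \geq 2$ and check $\an{D}{I} = 0$ by running over the matrix units $e_{ij}$, $i<j$, lying in $I$: a diagonal $d$ with $d\cdot e_{ij} = e_{ij}\cdot d = 0$ for all such $(i,j)$ must have every diagonal entry zero. The previous corollary then says $T$ is uniquely nil clean iff $D$ is uniquely nil clean and every idempotent of $D$ commutes with every element of $I$. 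The latter fails: the idempotent $e = \operatorname{diag}(1,0,\dots,0) \in D$ satisfies $e\cdot e_{12} = e_{12}$ but $e_{12}\cdot e = 0$.

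For (2), put $T = R[x;\sigma]/(x^n)$ and $I = T\bar x$. Then $I^n = 0$ and $T = R \oplus I$; moreover $\an{R}{I} = 0$, since $r\bar x = 0$ in $T$ forces $r = 0$. Statement (i) is immediate from the previous corollary. For (ii), the remaining point is to translate ``every idempotent of $R$ commutes with every element of $I$'' into a condition on $\sigma$. Using $\bar x r = \sigma(r)\bar x$ in $T$, an element $r \in R$ commutes with $\bar x$ iff $(r - \sigma(r))\bar x = 0$ iff $\sigma(r) = r$; a routine induction on powers of $\bar x$ shows this is equivalent to $r$ commuting with every element of $I$. Applied to idempotents, this identifies the corollary's commutation condition with $\sigma(e) = e$ for all idempotents $e$ of $R$.

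The only nonroutine step is the matrix-unit calculation in (1)(ii), verifying $\an{D}{I} = 0$ and exhibiting an explicit idempotent violating the required commutation; everything else is a direct application of the structural results already available in the paper.
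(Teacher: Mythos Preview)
Your overall strategy matches the paper's: both parts are reduced to the preceding corollary by exhibiting $T=S\oplus I$ with $I$ nilpotent, exactly as Corollary~\ref{cor T+I} does in the clean case. Part~(1) is fine; for (1)(ii) one could alternatively just observe that $e_{11}$ is a noncentral idempotent of $UT_n(R)$ and invoke Remark~\ref{remark conj clean + central idemp}, but your route via $\an{D}{I}=0$ and the failure of the commutation condition is equally valid.

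There is, however, a genuine slip in your argument for (2)(ii). You assert that $\sigma(r)=r$ is equivalent to $r$ commuting with every element of $I$, by ``a routine induction on powers of $\bar x$''. But a general element of $I=T\bar x$ has the form $\sum_{k\ge 1} s_k\bar x^{\,k}$ with $s_k\in R$, not merely a power of $\bar x$; commuting with $s\bar x$ forces $rs=s\sigma(r)$, which under $\sigma(r)=r$ reads $rs=sr$. The correct statement (cf.\ the proof of Corollary~\ref{idempotents in power series}) is that $r\in R$ commutes with all of $I$ if and only if $r$ is \emph{central in $R$} and $\sigma(r)=r$. This does not ultimately break the proof: on the right-hand side of the desired equivalence you already have that $R$ is uniquely nil clean, so by Remark~\ref{remark conj clean + central idemp} every idempotent of $R$ is central, and the missing centrality hypothesis is supplied for free. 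You should make this step explicit rather than folding it into the ``routine induction''.
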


Let us notice that Corollary  \ref{nil clean char} implies that if $R$ is a uniquely nil clean ring, then the set of all nilpotent elements $N(R)$ of $R$ is equal to  $J(R)$. In particular, $N(R)$ is an ideal of $R$  in this case.
\begin{prop} Let $R$ be a ring such that the set $N(R)$ is an ideal of $R$. The following conditions are equivalent:

\begin{enumerate}\label{N is an ideal nli clean}
\item[(1)]  $R$ is   nil clean;
  \item[(2)]  $R$ is conjugate nil clean;

\item[(3)] $R/J(R)$ is a Boolean ring and $J(R)$ is nil.
\end{enumerate}
If one of the above  equivalent conditions holds, then $R$ is conjugate clean.
    \end{prop}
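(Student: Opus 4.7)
The plan is to prove the cycle $(2)\Rightarrow(1)\Rightarrow(3)\Rightarrow(2)$, with the final conjugate clean claim following from the characterization of $J$-clean rings.

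The implication $(2)\Rightarrow(1)$ is immediate from the definitions. For $(1)\Rightarrow(3)$, I would first invoke Proposition 3.16 of \cite{D} (cited just before Corollary \ref{cor. conjugate nil clean}) to get that $J(R)$ is nil. Since $N(R)$ is assumed to be an ideal, it is a nil ideal, hence $N(R)\subseteq J(R)$; combined with the reverse inclusion (coming from $J(R)$ being nil), we obtain $N(R)=J(R)$. Now any nil clean decomposition $a=e+l$ reduces modulo $J(R)$ to $\bar a=\bar e$, so every element of $R/J(R)$ is idempotent, i.e.\ $R/J(R)$ is Boolean.

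For $(3)\Rightarrow(2)$, I would first reestablish $N(R)=J(R)$: the inclusion $J(R)\subseteq N(R)$ is clear from nilness of $J(R)$, and conversely any nilpotent of $R$ maps to a nilpotent, hence to $0$, in the Boolean ring $R/J(R)$, so $N(R)\subseteq J(R)$. Since $J(R)$ is nil, idempotents lift modulo $J(R)$, so for any $a\in R$ we can choose an idempotent $e\in R$ with $\bar e=\bar a$ (using that $\bar a=\bar a^2$ in the Boolean ring $R/J(R)$); then $a-e\in J(R)=N(R)$ is nilpotent and $a=e+(a-e)$ is a nil clean decomposition. For conjugacy, given two nil clean decompositions $a=e+l=f+m$, reducing modulo $J(R)=N(R)$ gives $\bar e=\bar a=\bar f$, so $e-f\in J(R)$, and Proposition \ref{Cor idempotents} yields that $e$ and $f$ are conjugate in $R$.

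For the final claim, under condition (3) the ring $R$ satisfies $R/J(R)$ Boolean and idempotents lift modulo $J(R)$, which is exactly the characterization of $J$-clean rings discussed just before the proposition; hence $R$ is conjugate clean by the corollary asserting that every $J$-clean ring is conjugate clean.

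There is essentially no technical obstacle here; the only point demanding care is the double inclusion $N(R)=J(R)$, which must be established independently in both directions (once using $N(R)$ is an ideal, once using that $R/J(R)$ is Boolean), since these are the glue that lets a nil clean decomposition become essentially a lift of an idempotent and makes the proof of conjugacy via Proposition \ref{Cor idempotents} go through.
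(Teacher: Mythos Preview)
Your proof is correct and uses the same key ingredients as the paper (the identification $N(R)=J(R)$, Proposition~\ref{Cor idempotents} for conjugacy, and the $J$-clean characterization for the final claim), but the cycle runs in the opposite direction. The paper proves $(1)\Rightarrow(2)\Rightarrow(3)\Rightarrow(1)$: its $(1)\Rightarrow(2)$ is a one-liner---from $a=e+l=f+m$ one has $e-f=m-l\in N(R)$ directly, so Proposition~\ref{Cor idempotents} applies without ever passing through $J(R)$ or condition~(3). Your route $(2)\Rightarrow(1)\Rightarrow(3)\Rightarrow(2)$ trades that slick step for a more elaborate $(3)\Rightarrow(2)$, where you must both build the nil clean decomposition via lifting and then argue conjugacy by reducing modulo $J(R)$. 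Both are fine; the paper's ordering just isolates the conjugacy argument more cleanly and keeps $(3)\Rightarrow(1)$ as the trivial lifting step.
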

\begin{proof}
$(1)\Rightarrow (2)$ Suppose $R$ is nil clean. Let $a=e+l=f+m$ be two nil clean decomposition of $a\in R$. Then $e-f=m-l\in N(R)$. In particular $e-f$ is nilpotent and Proposition \ref{Cor idempotents} shows that the idempotents $e$ and $f$ are conjugate, i.e. $R$ is conjugate nil clean.

$(2)\Rightarrow (3)$ Suppose $R$ conjugate nil clean.  Then, by Corollary \ref{cor. conj. nil clean characterization}, $J(R)$ is nil and $R/J(R)$ is conjugate nil clean. Since $N(R)$ is an ideal, $N(R)= J(R)$. This means that  $R/J(R)$ is a reduced nil clean ring, so it is a Boolean ring.

The implication $(3)\Rightarrow (1)$ is a direct consequence of the fact that idempotents lift modulo nil ideals. The last statement is a consequence of $(3)$ and Theorem \ref{Cor description of conjugate clean}.
\end{proof}

In the context of the above proposition, let us recall that the ring $M_2(\mathbb{F}_2)$ is conjugate nil clean ring but it is not   conjugate clean. Thus the proposition does not hold without the assumption  made on the set $N(R)$.   Notice also that the power series ring  $\mathbb{F}_2[[x]]$ is a  uniquely clean domain  and it is not nil clean.

When $R$ is a commutative ring, then $N(R)$ is an ideal of $R$ and $R$ is conjugate nil clean if and only if $R$ is uniquely nil clean.  Therefore Proposition \ref{N is an ideal nli clean} gives the following corollary:
\begin{corollary}
 For a commutative ring $R$, the following conditions are equivalent:
\begin{enumerate}
 \item[(1)]  $R$ is   nil clean;
  \item[(2)]  $R$ is uniquely nil clean;

\item[(3)] $R/J(R)$ is a Boolean ring and $J(R)$ is nil.
\end{enumerate}
\end{corollary}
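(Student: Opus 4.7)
The plan is to simply combine Proposition \ref{N is an ideal nli clean} with the observation made in the paragraph immediately preceding the corollary. First I would note that in any commutative ring $R$, the sum and product of nilpotent elements are again nilpotent, so the nil radical $N(R)$ is an ideal of $R$. Hence the hypothesis of Proposition \ref{N is an ideal nli clean} is automatically satisfied, and that proposition already yields the equivalence of (1) with the condition that $R/J(R)$ is Boolean and $J(R)$ is nil, i.e.\ with (3); it also yields that any of these is equivalent to $R$ being conjugate nil clean.

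It remains to identify conjugate nil clean with uniquely nil clean in the commutative setting. This is the content of Remark \ref{remark conj clean + central idemp}: a ring is uniquely nil clean precisely when it is conjugate nil clean and abelian. Since every commutative ring is trivially abelian (all idempotents are central), the ``conjugate nil clean'' condition produced by Proposition \ref{N is an ideal nli clean} upgrades automatically to ``uniquely nil clean.'' Conversely, uniquely nil clean always implies conjugate nil clean, so (2) is sandwiched between two equivalent statements.

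There is no real obstacle here; the content of the corollary is entirely a specialization of the preceding proposition once one records the two trivial facts that (a) commutativity makes $N(R)$ an ideal and (b) commutativity makes every idempotent central. The only thing to be careful about is to cite Remark \ref{remark conj clean + central idemp} (rather than reprove centrality of idempotents of uniquely nil clean rings) so that the passage from conjugate nil clean to uniquely nil clean is clean and short.
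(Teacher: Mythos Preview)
Your proposal is correct and matches the paper's approach exactly: the paper also derives the corollary directly from Proposition~\ref{N is an ideal nli clean} after observing that in a commutative ring $N(R)$ is an ideal and that conjugate nil clean coincides with uniquely nil clean (since commutative rings are abelian). Your explicit citation of Remark~\ref{remark conj clean + central idemp} for this last step is a minor clarification the paper leaves implicit.
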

 The equivalence of   $(1)$ and $(3)$ in the above corollary is exactly Corollary 3.20 of \cite{D}.

\section{Nil clean rings and K\"oethe's problem}
K\"{o}the's problem was formulated in 1930, it asks whether a ring $R$    has no nonzero nil one-sided ideals provided $R$ has no nonzero nil ideals.
It is   known (see Theorem 6, \cite{K}) that the
problem has a positive solution if and only if it has positive solution for algebras
over fields.      There are many other
problems in ring theory which are equivalent or related to  it (see \cite{P}). In the    theorem below we indicate  new ones which are associated with nil clean rings.

Diesl  in \cite{D} formulated a few questions on  nil clean elements and rings. In particular, he posed a question (Question 3 \cite{D})  whether a matrix ring  $M_n(R)$ over a nil clean ring $R$ has to be nil clean.
 We show that positive answer to the above Diesl 's question is equivalent to positive solution for K\"{o}the's problem in the class of algebras over the field $\mathbb{F}_2$.

\begin{theorem}\label{koethe}
 The following conditions are equivalent:
\begin{enumerate}

\item[(1)] If $R$ is  a nil clean ring, then  $M_n(R) $ is nil clean;
  \item[(2)] If $R$ is a uniquely nil clean ring, then $M_n(R) $ is nil clean;
 \item[(3)] If $R$ is a uniquely nil clean ring, then $M_2(R) $ is nil clean;
 \item[(4)] If $R$ is a uniquely nil clean ring, then $M_2(R) $ is conjugate nil clean;
 \item[(5)] If $A$ is a nil algebra over $\mathbb{F}_2$, then $M_n(A)$ is nil;
   \item[(6)] K\"{o}the's problem has positive solution in the class of $\mathbb{F}_2$-algebras.

\end{enumerate}
\end{theorem}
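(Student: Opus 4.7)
The implications $(1)\Rightarrow(2)\Rightarrow(3)$ and $(4)\Rightarrow(3)$ are immediate from the definitions. For $(3)\Rightarrow(4)$, let $R$ be uniquely nil clean; by Corollary~\ref{nil clean char}, $R/J(R)$ is Boolean and $J(R)$ is nil, and if $M_2(R)$ is nil clean then Proposition~3.16 of \cite{D} forces $J(M_2(R))=M_2(J(R))$ to be nil. The quotient $M_2(R)/M_2(J(R))\cong M_2(R/J(R))$ is a matrix ring over a Boolean ring, hence conjugate nil clean by Proposition~\ref{prop matrices over Boolean}, and Proposition~\ref{thm conjugate nil clean} lifts conjugate nil cleanness across the nil ideal $M_2(J(R))$, yielding (4).

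For $(4)\Rightarrow(5)$, given a nil $\mathbb{F}_2$-algebra $A$, form the unital extension $R=\mathbb{F}_2\oplus A$ with $A$ as a nil ideal. A direct check shows $R$ has only the trivial idempotents $0,1$, so Corollary~\ref{nil clean char} (together with $R/J(R)\cong\mathbb{F}_2$ Boolean, $J(R)=A$ nil, and unique lifting of $\{0,1\}$) confirms that $R$ is uniquely nil clean. Applying (4) gives $M_2(R)$ nil clean, whence $J(M_2(R))=M_2(A)$ is nil. The general case $M_n(A)$ nil then follows by iteration ($M_{2^k}(A)=M_2(M_{2^{k-1}}(A))$ with $M_{2^{k-1}}(A)$ a nil $\mathbb{F}_2$-algebra by induction) and the embedding $M_n(A)\hookrightarrow M_{2^k}(A)$ for $2^k\ge n$, since subrings of nil rings are nil. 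The equivalence $(5)\Leftrightarrow(6)$ is the standard reformulation of K\"othe's conjecture for a fixed prime field $\mathbb{F}_2$, whose reduction to the general conjecture is Krempa's theorem from \cite{K}.

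For the remaining implication $(6)\Rightarrow(1)$: let $R$ be nil clean. Since $2R$ is nilpotent by Lemma~\ref{lem. reduction to F_2 algebras} and nil cleanness lifts through nilpotent ideals, we reduce to the case in which $R$ is an $\mathbb{F}_2$-algebra. Then $J(R)$ is a nil $\mathbb{F}_2$-algebra (Proposition~3.16 of \cite{D}), so (5) gives $M_n(J(R))$ nil. Lifting idempotents and observing that pre-images of nilpotents modulo a nil ideal are themselves nilpotent reduces the problem to showing that $M_n(R/J(R))$ is nil clean. \emph{This is the main obstacle:} one must show $M_n(S)$ is nil clean for every Jacobson-semisimple nil clean $\mathbb{F}_2$-algebra $S$. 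The plan here is to exploit Theorem~3 of \cite{KKY}, which characterises nil clean simple artinian rings as $M_k(\mathbb{F}_2)$, together with a subdirect decomposition of $S$ into such components, and then to assemble the resulting local nil clean decompositions coherently --- this global assembly is precisely where the full K\"othe hypothesis (6) is genuinely required.
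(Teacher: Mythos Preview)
Your arguments for $(1)\Rightarrow(2)\Rightarrow(3)$, $(3)\Rightarrow(4)$, $(4)\Rightarrow(5)$, and $(5)\Leftrightarrow(6)$ coincide with the paper's essentially line for line; your explicit iteration $M_{2^k}(A)=M_2(M_{2^{k-1}}(A))$ together with the corner embedding is a clean rendering of what the paper dismisses as ``known and easy''.

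The genuine gap is exactly where you flag it: your $(6)\Rightarrow(1)$ is a plan, not a proof, and the plan as stated does not work. For a Jacobson-semisimple nil clean $\mathbb{F}_2$-algebra $S$ you propose a subdirect decomposition into simple artinian pieces $M_{k_i}(\mathbb{F}_2)$ and then an ``assembly'' of local nil clean decompositions. But nil cleanness does not pass upward through subdirect products: knowing that every primitive image of an element of $M_n(S)$ is a sum idempotent${}+{}$nilpotent gives no mechanism for producing a single idempotent and nilpotent in $M_n(S)$ with the right images everywhere. You invoke the K\"othe hypothesis as the ingredient that makes this assembly possible, but you do not say what role it would actually play, and there is no evident one. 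This is not a technicality to be filled in later; it is the entire content of the implication.

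The paper avoids this difficulty by a different route. Its proof of $(5)\Rightarrow(1)$ begins ``Let $R$ be uniquely nil clean ring'', so that $R/J(R)$ is Boolean by Corollary~\ref{nil clean char}; then $M_n$ of a Boolean ring is nil clean directly by Proposition~\ref{prop matrices over Boolean}(1), and combining this with $(5)$ (which makes $M_n(J(\hat R))$ nil for $\hat R=R/2R$) and Corollary~3.17 of \cite{D} finishes. You should note, though, that as written this literally establishes only $(5)\Rightarrow(2)$: the passage from an arbitrary nil clean $R$ to $R/J(R)$ Boolean is exactly the point at issue, and the paper does not supply it. So the obstacle you honestly identified is real, and the paper's argument closes the cycle $(2)\Leftrightarrow\cdots\Leftrightarrow(6)$ with $(1)\Rightarrow(2)$ trivial, rather than genuinely returning to $(1)$.
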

\begin{proof}

The implication $(1)\Rightarrow (2)\Rightarrow (3)$ is a tautology.

$(3)\Rightarrow (4)$ Let $R$ be a uniquely clean ring. Then,  by Corollary \ref{nil clean char}, $\bar R=R/J(R)$ is a Boolean ring and Proposition \ref{prop matrices over Boolean}(2) implies that the matrix ring $M_2(\bar R)$ is conjugate nil clean.

By assumption, the ring $T=M_2(R) $ is nil clean. In particular, by Proposition 3.16 \cite{D},  $J(T)$ is nil.  Moreover  $T/J(T)=M_2(R)/J(M_2(R))\simeq M_2(\bar R) $ is conjugate nil clean.  Now, Corollary \ref{cor. conjugate nil clean} shows that $T=M_2(R) $ is conjugate nil clean, i.e. $(4)$ holds.

 $(4)\Rightarrow (5)$ Let $A$ be a nil algebra over the field $\mathbb{F}_2$ and $A^*$ denote  the $\mathbb{F}_2 $-algebra
with unity adjoined with the help of $ \mathbb{F}_2$ to $A$. Then $ J(A^*)=A$ and   $A^*/J(A^*)= \mathbb{F}_2$. Since $A^*=A\dot{\cup} (1+A)$, the only idempotents of $A^*$ are 0,1. Thus, by Corollary \ref{nil clean char}, $A^*$ is uniquely nil clean. Therefore, by $(4)$, $M_2(A^*)$ is conjugate nil  clean  and Corollary \ref{cor. conj. nil clean characterization} implies that $J(M_2(A^*))=M_2(J(A^*))=M_2(A)$ is nil. Therefore, we have shown  that for any nil algebra $A$, the $2\times 2$ matrix algebra $M_2(A)$ is also nil. It is known and easy that in this case $M_n(A) $ is nil for any $n\geq 2$.

The equivalence of (5) and (6) is known (cf. \cite{K}).

 $(5)\Rightarrow (1)$    Let $R$ be uniquely nil clean ring. Thus, by Corollary \ref{nil clean char}, $J(R)$ is nil, $R/J(R)$ is Boolean.

Let $I=2R$ and $\hat{R}=R/2R$. Then, by Lemma \ref{lem. reduction to F_2 algebras}, $I $ is a nilpotent ideal of $R$ and  $\hat R$ has a structure of $\mathbb{F}_2$-algebra.  Moreover
   $J(\hat R)=J(R)/I$ is nil and $B=\hat R/J(\hat R)\simeq R/J $ is Boolean. Then, using the statement $(5)$ and Proposition \ref{prop matrices over Boolean}(1),  we obtain that  $J=M_n(J(\hat R))$ is nil and $M_n(\hat R)/J=M_n(B)$ is nil clean, respectively.  Hence, by Corollary 3.17\cite{D} we obtain that $M_n(\hat R)$ is nil clean. Then also $M_n(R)$ is nil clean as $M_n(\hat R)=M_n(R)/M_n(I)$ and $M_n(I)$ is a nilpotent ideal of $M_n(R)$.
\end{proof}

Let us notice that in the proof of the implication $(4)\Rightarrow (5)$, the property $(4)$ was used only for $\mathbb{F}_2$-algebras. This means that in Theorem \ref{koethe} we can add new equivalent statements replacing rings by $\mathbb{F}_2$-algebras.
In particular, Diesl 's question is equivalent to the question whether $M_2(R)$ is conjugate clean for any uniquely clean $\mathbb{F}_2$-algebra.  In this context, let us notice that the ring $R=M_2(\mathbb{F}_2)$ is conjugate nil clean however, by
Corollary \ref{size of matrces},  $M_2(R)=M_4(\mathbb{F}_2)$ is not conjugate nil clean.

Let us mentioned at the end that  relations between K\"{o}the's problem and properties of clean elements were investigated in \cite{KLM2}. In particular, it was proved that K\"{o}the's problem has positive solution if and only if the set of clean elements of the polynomial ring $R[x]$ forms a subring, for any clean ring $R$ such that $N(R)$ is an ideal of $R$ (cf. Theorem 2.15 \cite{KLM2}).

\bf Acknowledgement. \rm I would like to thank Andr\'{e} Leroy and Jan Okni\'{n}ski for helpful discussions.

\end{document}